\sloppy\pagestyle{plain}
\theoremstyle{definition}
\newtheorem*{example*}{Example}
\newtheorem{definition}[equation]{Definition}
\newtheorem{theorem}[equation]{Theorem}
\newtheorem{lemma}[equation]{Lemma}
\newtheorem{corollary}[equation]{Corollary}
\newtheorem{proposition}[equation]{Proposition}
\newtheorem*{conjecture*}{Conjecture}
\newtheorem*{question*}{Question}
\newtheorem*{problem*}{Problem}
\newtheorem*{theorem*}{Theorem}
\newtheorem*{Maintheorem*}{Main Theorem}
\theoremstyle{remark}
\newtheorem{remark}[equation]{Remark}
\newtheorem*{remark*}{Remark}
\newtheorem{claim}[equation]{Claim}
\makeatletter\@addtoreset{equation}{section} \makeatother
\renewcommand{\theequation}{\thesection.\arabic{equation}}
\author{Jihun Park and Joonyeong Won}
\title{Simply connected Sasaki-Einstein  rational homology $5$-spheres}
\begin{document}

\begin{abstract}
We completely determine which  simply connected rational homology $5$-spheres  admit Sasaki-Einstein metrics.
\end{abstract}

\subjclass[2010]{53C25,  32Q20, 14J45.}

\address{ \emph{Jihun Park}\newline \textnormal{Center for Geometry and Physics, Institute for Basic Science
\newline \medskip 77 Cheongam-ro, Nam-gu, Pohang, Gyeongbuk, 37673, Korea \newline
Department of
Mathematics, POSTECH
\newline
77 Cheongam-ro, Nam-gu, Pohang, Gyeongbuk,  37673, Korea \newline
\texttt{wlog@postech.ac.kr}}}

\address{ \emph{Joonyeong Won}\newline \textnormal{Center for Geometry and Physics, Institute for Basic Science
\newline  77 Cheongam-ro, Nam-gu, Pohang, Gyeongbuk, 37673, Korea \newline
\texttt{leonwon@kias.re.kr}}}

\maketitle


\section{Introduction}
\label{section:intro}

A Riemannian manifold $(M,g)$  is called Sasakian if its conical metric $\bar{g}=r^2g+dr^2$ is a K\"ahler metric on the cone $C(M)=M\times\mathbb{R}^{+}$.  Sasakian metrics, which are defined on odd dimensional manifolds, can be considered as an odd dimensional counterpart  of K\"ahler metrics, which are defined on even dimensional manifolds. If the metric $g$ satisfies the Einstein condition, i.e., $\mathrm{Ric}_g=\lambda g$ for some constant $\lambda$, then the metric $g$ is called Einstein. It is well-known that a $(2n-1)$-dimensional Sasakian manifold can be Einstein only for $\lambda=2(n-1)$. Furthermore, a Sasakian metric $g$ is Einstein if and only if the conical metric $\bar{g}$ is Ricci-flat, i.e., $\mathrm{Ric}_{\bar{g}}=0$. 
The Sasakian manifold $M$ is isometrically embedded into $C(M)$ by~$M=M\times\{1\}\hookrightarrow C(M)$.  The cone $C(M)$ is equipped with an integrable complex structure $J$ since it is K\"ahler. The canonical vector field $r\partial_r$ defines the Reeb vector field $\xi$ on $M$ through the integrable complex structure, i.e.,
$\xi:=J(r\partial_r).$
Sasakian manifolds can be classified into three types according to the Reeb foliation $\mathcal{F}_\xi$ given by the Reeb vector field $\xi$. If the orbits of  the Reeb vector field $\xi$ are all closed, then $\xi$ integrates to an isometric $S^1$-action on $M$. Since $\xi$ vanishes nowhere, the action is locally free. If the action is free, then the Sasakian structure is said to be regular. If not, then it is said to be quasi-regular. On the other hand, if the orbits of  the Reeb vector field $\xi$ are not all closed, then it is said to be irregular. In the regular or the quasi-regular case, the space of leaves of the Reeb foliation $\mathcal{F}_\xi$ is a compact K\"ahler manifold  or orbifold, respectively. Furthermore, if $M$ is Sasaki-Einstein, then it becomes a K\"ahler-Einstein manifold  or orbifold.
Indeed, the classification of $(2n-1)$-dimensional quasi-regular Sasaki-Einstein manifolds is closely related to  the study of $(n-1)$-dimensional K\"ahler-Einstein Fano orbifolds (\cite[Proposition~7.5.33]{BoyerBook}). 

It is not an easy task to determine whether a given Fano orbifold admits an orbifold K\"ahler-Einstein metric. However,
the seminal work of Chen, Donaldson, Sun and Tian (\cite{CDS15}, \cite{TianYTD}) on existence of K\"ahler-Einstein metrics on Fano manifolds and their K-stability has  opened wide  a  new gate to an area where existence of K\"ahler-Einstein metrics  can be determined in purely algebraic ways. Since then, the result has been gradually being developed toward log $\mathbb{Q}$-Fano varieties (\cite{Berman}, \cite{LTW17},  \cite{LTW19}, \cite{LWX14},  \cite{SSY16}, \cite{TW}). Indeed, Li, Tian and Wang proved in \cite{LTW19} that the result of Chen, Donaldson, Sun and Tian
also holds for  log $\mathbb{Q}$-Fano varieties with  a mild assumption.
The following theorem is a simplified version of their result that allows us to immediately utilize it for our purpose.

\begin{theorem}[\cite{LTW19}]
\label{theorem:CDS}
Let $S$ be a del Pezzo surface with quotient singularities and $D$ be a prime divisor on $S$. Suppose that $-(K_S+\frac{m-1}{m}D)$ is ample for a positive integer $m$.
If $(S, \frac{m-1}{m}D)$ is uniformly  K-stable, then $S$ has a  K\"ahler-Einstein edge metric with angle $\frac{2\pi}{m}$ along~$D$.
\end{theorem}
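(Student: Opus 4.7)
The plan is to exhibit Theorem~\ref{theorem:CDS} as a direct specialization of the main result of \cite{LTW19}, so the work is not to reprove the deep analytic/algebraic correspondence but to verify that the pair $(S,\tfrac{m-1}{m}D)$ fits into the hypotheses of that general theorem and then to translate its output (a weak K\"ahler--Einstein metric in the sense of log $\mathbb{Q}$-Fano varieties) into the language of K\"ahler--Einstein edge metrics with a prescribed cone angle along a smooth divisor.

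First I would check that $(S,\tfrac{m-1}{m}D)$ is a log $\mathbb{Q}$-Fano pair in the sense of \cite{LTW19}. Since $S$ is a normal projective surface with quotient singularities, it is $\mathbb{Q}$-factorial and its singularities are Kawamata log terminal; the coefficient $\tfrac{m-1}{m}$ lies strictly in $[0,1)$ for every positive integer~$m$, so adding the prime divisor $D$ with this coefficient preserves the KLT property (using inversion of adjunction along $D$, which in the surface setting is classical). The hypothesis that $-(K_S+\tfrac{m-1}{m}D)$ is ample supplies the log Fano condition, and by construction the pair is a two-dimensional log $\mathbb{Q}$-Fano variety to which \cite[Theorem]{LTW19} applies.

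Next I would invoke the Li--Tian--Wang theorem: uniform K-stability of $(S,\tfrac{m-1}{m}D)$ implies the existence of a weak K\"ahler--Einstein metric on the pair, i.e.\ a singular K\"ahler--Einstein metric whose current has the prescribed logarithmic singularities along $D$ and bounded potentials elsewhere. The final step is to recognize this metric as a K\"ahler--Einstein edge metric with cone angle $2\pi\beta$ along $D$, where $\beta=1/m$ corresponds to the coefficient $1-\beta=\tfrac{m-1}{m}$; this is the standard dictionary, e.g.\ as developed in the smooth case by Jeffres--Mazzeo--Rubinstein and extended to the log Fano setting in \cite{LTW19}, so that the cone-angle assertion $\tfrac{2\pi}{m}$ is automatic from $\beta=1/m$.

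The only genuinely nontrivial point is that the output of \cite{LTW19} a priori lives in the weak/pluripotential category, whereas the statement we use demands a bona fide edge metric. In the surface case this is unproblematic: $S$ has only quotient singularities (hence is locally analytically the quotient of a smooth germ), the divisor $D$ can be treated on the local uniformizing covers, and the higher regularity of weak K\"ahler--Einstein metrics on such pairs is known once existence and boundedness of potentials are established. Thus the remaining task in a full write-up would just be to cite the appropriate regularity statements so that the metric obtained from uniform K-stability is genuinely an edge metric of angle $2\pi/m$ along $D$ away from the quotient singular locus of $S$.
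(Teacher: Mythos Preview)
The paper does not supply a proof of this theorem at all: it is stated purely as a citation, introduced with ``The following theorem is a simplified version of their result,'' and attributed to \cite{LTW19}. So there is no argument in the paper to compare against; your proposal is essentially a justification of why the citation is legitimate, which is more than the paper itself offers.

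Your outline is the right shape: check that $(S,\tfrac{m-1}{m}D)$ is a klt log $\mathbb{Q}$-Fano pair so that the main theorem of \cite{LTW19} applies, invoke that theorem to get a (weak) K\"ahler--Einstein metric on the pair, and then interpret the output as an edge metric of angle $2\pi/m$. One small caution: the step where you assert that adding $\tfrac{m-1}{m}D$ ``preserves the KLT property (using inversion of adjunction)'' is not automatic in the generality stated. Having coefficient strictly less than $1$ and $S$ klt does not by itself force $(S,\tfrac{m-1}{m}D)$ to be klt---this depends on the singularities of $D$ and on how $D$ meets $\mathrm{Sing}(S)$. In the paper's intended application the pair $(Y_n,\tfrac12 C_w)$ is explicitly a log del Pezzo surface, so the issue does not arise there; and in any event the phrase ``uniformly K-stable'' in \cite{LTW19} is formulated for klt log Fano pairs, so klt-ness is effectively built into the hypothesis rather than something to be deduced. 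With that adjustment, your reduction to \cite{LTW19} is correct and matches how the paper uses the result.
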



Even though the theorem translates existence of K\"ahler-Einstein metrics  into an algebraic condition,  this algebraic condition is still extremely difficult to check in explicit  cases.
There are a few algebro-geometric  methods known to us that can  verify K-stability in concrete cases. The $\alpha$-invariant originally introduced by Tian (\cite{Tian87}) is one of the ways. 
The original definition of the $\alpha$-invariant was given  in an analytic way.  There is however an algebro-geometric   way to define the $\alpha$-invariant over an arbitrary field of characteristic zero.
 \begin{definition}
Let $(X, \Delta)$ be a log $\mathbb{Q}$-Fano variety. The $\alpha$-invariant of $(X, \Delta)$
is defined by the number
\[\alpha(X, \Delta)=\sup\left\{\lambda\in\mathbb{Q}\ \left|%
\aligned
&\text{the log pair}\ \left(X, \Delta+\lambda D\right)\ \text{is log canonical for every} \\
&\text{effective $\mathbb{Q}$-divisor $D$ numerically equivalent to $-(K_X+\Delta)$.}\\
\endaligned\right.\right\}.%
\]
\end{definition}

The $\alpha$-invariant plays a role in K\"ahler geometry by giving a sufficient condition for existence of orbifold K\"ahler-Einstein metrics.
\begin{theorem}[{\cite{DeKo01}, \cite{Nadel}, \cite{Tian87}}]
\label{theorem:alpha}
Let $(X,\Delta)$ be a Fano orbifold.
If $$\alpha(X, \Delta)>\frac{\dim(X)}{\dim(X)+1},$$ then $(X,\Delta)$ admits an orbifold K\"ahler-Einstein metric.
\end{theorem}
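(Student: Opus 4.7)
The plan is to adapt the Aubin--Yau continuity method for the complex Monge--Amp\`ere equation to the orbifold setting, following Tian's strategy with the $\alpha$-invariant. I would fix a reference orbifold K\"ahler metric $\omega$ in the class $-c_1(K_X+\Delta)$ with Ricci potential $h_\omega$ satisfying $\mathrm{Ric}(\omega)-\omega = \sqrt{-1}\partial\bar\partial h_\omega$, and study the family of Monge--Amp\`ere equations
\[
(\omega + \sqrt{-1}\partial\bar\partial \varphi_t)^n = e^{h_\omega - t\varphi_t}\omega^n,\qquad t \in [0,1],
\]
on local uniformizing charts of the orbifold. A solution at $t = 1$ would yield the desired orbifold K\"ahler--Einstein metric. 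The set $T \subset [0,1]$ of $t$ admitting a smooth orbifold solution is nonempty at $t = 0$ by Yau's theorem, and openness of $T$ follows from the implicit function theorem applied equivariantly on local uniformizing charts, since for $t>0$ the linearization $\Delta_{\omega_{\varphi_t}} + t$ is invertible; the real work is closedness.

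By standard orbifold elliptic regularity, namely Evans--Krylov and Schauder bootstrap on local uniformizing charts together with Yau's second-order estimate, closedness reduces to a uniform $C^0$ bound on $\varphi_t$. After normalizing $\sup_X \varphi_t = 0$, the Monge--Amp\`ere equation yields a uniform bound on $\int_X e^{-t\varphi_t}\omega^n$, and the task becomes upgrading this one-sided integral bound to a uniform bound on $-\inf_X \varphi_t$. This is where the hypothesis on $\alpha(X,\Delta)$ enters: by the orbifold integrability theorem of Demailly--Koll\'ar, the assumption $\alpha(X,\Delta) > \alpha_0$ supplies a uniform estimate
\[
\int_X e^{-\alpha_0 \varphi}\omega^n \leq C(\alpha_0)
\]
for every orbifold $\omega$-plurisubharmonic function $\varphi$ with $\sup_X \varphi = 0$. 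Plugging this into Tian's Moser--Trudinger-type iteration, the critical value $\frac{n}{n+1}$ is precisely the threshold that makes the associated energy functional coercive along the continuity path, producing the required $C^0$ bound and thereby the desired orbifold K\"ahler--Einstein metric at $t = 1$.

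The hard part is the orbifold integrability statement matching the algebraic $\alpha$-invariant in the definition above with the analytic constant controlling $\int_X e^{-\alpha_0 \varphi}\omega^n$. My approach would be to pull back to local uniformizing covers, apply H\"ormander's $L^2$-estimate and the Ohsawa--Takegoshi extension theorem as in Demailly's original proof, and absorb the contribution of the boundary divisor $\Delta$ into the log canonical threshold of the pair $(X,\Delta)$; the assumption $\alpha_0 < 1$ guarantees $\lfloor \alpha_0 \Delta \rfloor = 0$, so the multiplier ideal sheaves attached to $\alpha_0(\Delta + D)$ stay controlled and the argument reduces, on each chart, to the smooth Fano case. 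Once this analytic--algebraic bridge is in place, Tian's higher-order estimates go through verbatim on local orbifold charts and close the continuity method.
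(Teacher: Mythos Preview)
The paper does not supply a proof of this theorem; it is quoted as a known result from the cited references \cite{Tian87}, \cite{Nadel}, \cite{DeKo01}, and is used only as background motivation (in fact the paper explicitly notes that the $\alpha$-invariant is \emph{not} sharp enough for its purposes and passes instead to the $\delta$-invariant). So there is no in-paper proof to compare against.

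Your outline is a faithful sketch of the classical argument: Aubin--Yau continuity method, reduction of closedness to a uniform $C^0$ bound, and Tian's use of the integrability constant to close the estimate at the threshold $\frac{n}{n+1}$, with the Demailly--Koll\'ar semicontinuity/integrability theorem supplying the bridge between the algebraic $\alpha$-invariant and the analytic exponential integral in the orbifold setting. Two small points worth tightening if you were to write this out in full: first, the phrase ``Tian's Moser--Trudinger-type iteration'' is a bit loose; the actual mechanism is an interpolation between the integral bound $\int e^{-t\varphi_t}\omega^n\leq C$ coming from the Monge--Amp\`ere equation and the uniform bound $\int e^{-\alpha_0\varphi_t}\omega^n\leq C$ coming from the $\alpha$-invariant, and the constant $\frac{n}{n+1}$ arises from a concavity/H\"older-inequality computation rather than coercivity of an energy functional per se. Second, your remark that $\alpha_0<1$ forces $\lfloor \alpha_0\Delta\rfloor=0$ is correct here because orbifold boundary coefficients are of the form $\frac{m-1}{m}<1$, but you should say that explicitly rather than leave it implicit. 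With those caveats, the proposal is correct and in line with the literature the paper cites.
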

It quite often occurs that the  $\alpha$-invariant cannot determine existence of  an orbifold K\"ahler-Einstein metric on  a given Fano orbifold.

Recently Fujita and Odaka  introduced  a new algebro-geometric  way to test K-stability of log $\mathbb{Q}$-Fano varieties.  
Due to the works \cite{Berman}, \cite{CDS15}, \cite{LTW17},  \cite{LTW19}, \cite{LWX14},  \cite{SSY16}, \cite{TianYTD} and \cite{TW}, this supplies another method to check existence of orbifold K\"ahler-Einstein metrics.

To explain the method of Fujita and Odaka, let $(X, \Delta)$ be a $\mathbb{Q}$-factorial log pair with Kawamata log terminal singularities,
$Z\subset X$ a closed subvariety and $D$ an effective
 $\mathbb{Q}$-divisor on $X$. The log canonical threshold
of $D$ along $Z$ on the log pair $(X, \Delta)$  is the number given by 
$$c_Z(X,\Delta; D)=\mathrm{sup}\left\{\lambda\ \Big|\ \mbox{the log pair } (X, \Delta+\lambda D)
 ~\text{is log canonical  along}~Z.\right\}.$$
Since log canonicity is a local property, 
$$c_Z(X,\Delta; D)=\inf_{p\in Z} \left\{c_p(X,\Delta; D)\right\}.$$
If $X=\mathbb{C}^n$, $\Delta=0$, and $D=(f=0)$, where $f$ is a polynomial defined over $\mathbb{C}^n$, then we also
 use the notation $c_0(f)$ for the log canonical threshold of
 $D$ at the origin, instead of $c_0(X, 0; D)$.

\begin{definition}[\cite{BlumThesis}, \cite{Blum-Jonsson17}, \cite{Fujita-Odaka16}]
Let $(X, \Delta)$ be a log $\mathbb{Q}$-Fano variety and let $m$ be a positive integer such that the plurianticanonical linear system $|-m(K_X+\Delta)|$ is non-empty. Set $\ell_m=h^0(X,\mathcal{O}_X(-m(K_X+\Delta)))$.  For a section $s$ in $\mathrm{H}^0(X,\mathcal{O}_X(-m(K_X+\Delta)))$, we denote the effective divisor of the section $s$ by $D(s)$.
If $\ell_m$ sections $s_1,\ldots, s_{\ell_m}$  form  a basis of  the space $\mathrm{H}^0(X,\mathcal{O}_X(-m(K_X+\Delta)))$, then the   
$\mathbb{Q}$-divisor 
\[D:=\frac{1}{\ell_m}\sum_{i=1}^{\ell_m}\frac{1}{m}D (s_i)\]
is said to be of $m$-basis type with respect to the log $\mathbb{Q}$-Fano variety $(X,\Delta)$.
For a positive integer $m$, we set 
\[\delta_m(X,\Delta)=\inf_{\underset{\mbox{$m$-basis type}}{D:}}c_X(X,\Delta;D).\]
We set $\delta_m(X, \Delta)=0$ if $|-m(K_X+\Delta)|$ is empty.
The $\delta$-invariant of $(X,\Delta)$ is defined by the number
\[\delta(X,\Delta)=\limsup_m \delta_m(X,\Delta).\]
\end{definition}

The $\delta$-invariant turns out to provide  a necessary and sufficient criterion for uniform K-stability.
\begin{theorem}[{\cite[Theorem~D]{BlumThesis}, \cite{Blum-Jonsson17}, \cite{Fujita-Odaka16}}]
\label{theorem:delta}
Let $(X,\Delta)$ be a log $\mathbb{Q}$-Fano variety.
Then~$(X, \Delta)$ is uniformly K-stable if and only if $\delta(X,\Delta)>1$.
\end{theorem}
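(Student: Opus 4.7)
The plan is to reduce the equivalence to the Fujita--Li valuative criterion for uniform K-stability, which asserts that $(X,\Delta)$ is uniformly K-stable if and only if
$$\inf_{E}\frac{A_{X,\Delta}(E)}{S_{X,\Delta}(E)}>1,$$
where $E$ runs over prime divisors over $X$, $A_{X,\Delta}(E)$ is the log discrepancy, and
$$S_{X,\Delta}(E)=\frac{1}{\mathrm{vol}(-K_X-\Delta)}\int_0^\infty \mathrm{vol}\bigl(-K_X-\Delta-tE\bigr)\,dt.$$
Identifying $\delta(X,\Delta)$ with this valuative infimum will then make Theorem~\ref{theorem:delta} immediate, so the whole task is this identification.

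First I would rewrite $\delta_m$ valuatively. For any prime divisor $E$ over $X$, the function $\mathrm{ord}_E$ filters $V_m:=H^0(X,\mathcal{O}_X(-m(K_X+\Delta)))$ by $F^tV_m:=\{s:\mathrm{ord}_E(s)\geq t\}$. A basis $s_1,\ldots,s_{\ell_m}$ of $V_m$ compatible with this filtration maximizes $\sum_i\mathrm{ord}_E(s_i)$, and therefore also $\mathrm{ord}_E(D)$ among all $m$-basis type divisors $D$; denote this common maximum by $m\,S_m(E)\,\ell_m$. Combining the standard multiplicity--log-discrepancy bound with the fact that the log canonical threshold of such a $D$ along $E$ is computed by $A_{X,\Delta}(E)/\mathrm{ord}_E(D)$ yields
$$\delta_m(X,\Delta)=\inf_E\frac{A_{X,\Delta}(E)}{S_m(E)}.$$

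The second step is to pass to the limit $m\to\infty$. For each fixed $E$, the Boucksom--Chen theory of Okounkov bodies for filtered linear series gives $S_m(E)\to S_{X,\Delta}(E)$, and an argument of Fujita--Odaka shows that the sequence is in fact essentially monotone, so that the $\limsup$ in the definition of $\delta(X,\Delta)$ is a genuine limit. The crucial additional input, supplied by Blum--Jonsson, is that this convergence is \emph{uniform} in $E$ after normalization by $A_{X,\Delta}(E)$, which permits exchanging infimum and limit and yields
$$\delta(X,\Delta)=\inf_E\frac{A_{X,\Delta}(E)}{S_{X,\Delta}(E)}.$$
Applying Fujita--Li then finishes the proof.

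The main obstacle is precisely this uniform convergence. Pointwise convergence for each valuation is essentially bookkeeping with concave transforms on Okounkov bodies, but controlling the error uniformly over the non-compact space of divisorial valuations with bounded $A/S$-ratio is delicate; one must approximate an arbitrary divisorial valuation by rational ones whose Okounkov slices have controlled volume, and then apply a continuity theorem of Boucksom--Chen. This uniform control is exactly what separates the characterization of uniform K-stability from the (strictly harder) characterization of ordinary K-stability, and it is what is developed in \cite{BlumThesis} and \cite{Blum-Jonsson17}; once it is in hand, both implications of the equivalence follow formally from the valuative criterion.
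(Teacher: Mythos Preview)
The paper does not supply a proof of this theorem: it is stated as a cited external result, with references to \cite{BlumThesis}, \cite{Blum-Jonsson17}, and \cite{Fujita-Odaka16}, and is then used as a black box in the rest of the argument. So there is nothing in the paper to compare your proposal against.

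That said, your sketch is an accurate high-level summary of how the proof actually goes in the cited sources: the valuative reformulation $\delta_m(X,\Delta)=\inf_E A_{X,\Delta}(E)/S_m(E)$ is due to Fujita--Odaka, the uniform convergence $S_m(E)\to S_{X,\Delta}(E)$ permitting the exchange of $\inf_E$ and $\lim_m$ is the main technical contribution of Blum--Jonsson, and the identification of $\inf_E A_{X,\Delta}(E)/S_{X,\Delta}(E)>1$ with uniform K-stability is the Fujita--Li valuative criterion. Your diagnosis of the main obstacle (uniformity of convergence over the space of divisorial valuations) is also correct.
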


This potent criterion  has been put into practice  for smooth del Pezzo surfaces in  \cite{CRZ18}, \cite{CZ}, \cite{ParkWon},
and therein its effectiveness has been presented.

The development of the theory on quasi-regular Sasaki-Einstein metrics has followed that of the theory on K\"ahler-Einstein metrics on Fano varieties.
Indeed, since the $\alpha$-invariant method was adapted for Fano orbifolds by Demailly and Koll\'ar in \cite{DeKo01}, numerous  K\"ahler-Einstein Fano orbifolds have been detected, in particular, in dimension $2$. Such K\"ahler-Einstein Fano orbifolds  could yield  many examples of Sasaki-Einstein manifolds using the method introduced by Kobayashi (\cite[Theorem~5]{Kob63}) and fully matured by Boyer, Galicki and Koll\'ar (\cite{BG00}, \cite{BGK05}).

Now we have been strongly reinforced by new technologies for detecting K\"ahler-Einstein Fano orbifolds, in particular, the $\delta$-invariant method, so it would be natural to expect that many hidden Sasaki-Einstein manifolds can be detected by the new methods.
Indeed,  the  classification of  simply connected Sasaki-Einstein rational homology 5-spheres can be completed by applying the $\delta$-invariant method to certain hypersurfaces in  $3$-dimensional weighted projective spaces.

The main result of the present article is the complete classification of  simply connected Sasaki-Einstein  rational homology 5-spheres. Before we state the Main Theorem, let us explain how  closed simply connected spin $5$-manifolds are classified in \cite{Smale62}.

\begin{theorem}[\cite{Smale62}] For a positive integer $m$, there is a unique closed simply connected $5$-dimensional manifold $M_m$ with $\mathrm{H}_2(M_m,\mathbb{Z})=\mathbb{Z}/m\mathbb{Z}\oplus \mathbb{Z}/m\mathbb{Z}$ that admits a spin structure. Furthermore, 
a closed simply connected $5$-dimensional manifold $M$ that admits a spin structure  is of the form
\[M=k(S^2\times S^3)\#M_{m_1}\#\ldots\# M_{m_r},\]
where  $k(S^2\times S^3)$ is the $k$-fold connected sum of $S^2\times S^3$ for a  non-negative integer $k$ and $m_i$ is a positive integer greater than 1 with $m_i$ dividing $m_{i+1}$.
\end{theorem}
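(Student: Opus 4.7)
The plan is to combine Poincar\'e duality with the $h$-cobordism theorem, together with an explicit geometric construction of the building blocks $M_m$. Since $M$ is closed, oriented and simply connected, the universal coefficient theorem and Poincar\'e duality reduce the homology of $M$ to its second homology: one has $H_0(M)=H_5(M)=\mathbb{Z}$, $H_1(M)=H_4(M)=0$ and $H_3(M)\cong H_2(M)/\mathrm{Tors}$, while the torsion subgroup $T:=\mathrm{Tors}\,H_2(M)$ carries a non-degenerate symmetric linking form $\lambda\colon T\times T\to \mathbb{Q}/\mathbb{Z}$. The Wu formula together with $w_2(M)=0$ produces a quadratic refinement of $\lambda$, and a standard algebraic argument on such refined linking forms forces $T$ to be a direct sum of summands of the form $\mathbb{Z}/m\oplus\mathbb{Z}/m$. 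This already pins down the algebraic shape of $H_2(M)$ asserted in the theorem.

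For existence of $M_m$ I would exhibit it as a Brieskorn link, namely the intersection of $S^9\subset\mathbb{C}^5$ with the hypersurface $\{x_1^2+x_2^2+x_3^2+x_4^2+x_5^m=0\}$: this is simply connected, it is spin since it bounds the stably parallelisable Milnor fibre, and a direct Milnor-fibration computation gives $H_2\cong \mathbb{Z}/m\oplus\mathbb{Z}/m$. Uniqueness of $M_m$ and the connected-sum decomposition of a general $M$ both then rest on surgery and the $h$-cobordism theorem. For uniqueness, I would construct a simply connected $6$-dimensional cobordism between two candidates with matching linking form, simplify its handle decomposition until the boundary inclusions become homotopy equivalences, and invoke $h$-cobordism. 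For the decomposition, I would induct on $b_2(M)+|T|$: a primitive free generator of $H_2(M)$ is representable by an embedded $2$-sphere $\Sigma\subset M$ whose normal bundle is forced to be trivial by the spin hypothesis, since oriented rank-$3$ bundles over $S^2$ are classified by $\pi_1(SO(3))=\mathbb{Z}/2$ detected by $w_2$ and $w_2(\nu_\Sigma)=w_2(M)|_\Sigma=0$, so surgery along $\Sigma$ splits off an $S^2\times S^3$ summand. Similarly each torsion summand $\mathbb{Z}/m\oplus\mathbb{Z}/m$ is to be realised geometrically, with a regular neighbourhood diffeomorphic to a punctured $M_m$, and surgery along it splits off a copy of $M_m$; ordering the torsion factors by elementary divisors yields the divisibility condition $m_i\mid m_{i+1}$.

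The main obstacle I foresee is the geometric realisation used in the torsion surgery step: one must find, for each summand $\mathbb{Z}/m\oplus\mathbb{Z}/m$ of $T$, an embedded framed submanifold in $M$ along which surgery removes precisely that algebraic summand from the linking form while leaving a simply connected spin complement. This requires controlling simultaneously the embedding, its normal framing, the behaviour of the linking form under surgery, and the spin structure. By contrast, the $h$-cobordism step and the free-summand surgery are relatively routine in this simply connected dimension once the algebraic framework of the first paragraph is set up.
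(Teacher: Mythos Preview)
The paper does not prove this statement at all: it is quoted verbatim as Smale's 1962 classification theorem and used only as background to set up the notation $M_m$ and $kM_m$. There is therefore no proof in the paper to compare your proposal against; the authors simply cite \cite{Smale62} (and \cite{Barden} for the non-spin extension) and move on.

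Your sketch is a reasonable outline of how Smale's argument actually proceeds (linking form plus quadratic refinement from the spin condition, surgery to split off summands, $h$-cobordism for uniqueness), but note one concrete slip: the Brieskorn link you write down, $S^9\cap\{x_1^2+\cdots+x_4^2+x_5^m=0\}\subset\mathbb{C}^5$, is a $7$-manifold, not a $5$-manifold. For a $5$-dimensional link you need a hypersurface in $\mathbb{C}^4$ intersected with $S^7$; the standard model for $M_m$ is rather the Brieskorn link $\Sigma(2,2,2,m)$, or equivalently one of the links used later in the paper itself. This is a bookkeeping error rather than a conceptual one, but it would need fixing before the existence paragraph is usable.
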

We denote by $kM_m$ the $k$-fold connected sum of $M_m$. Since a simply connected Sasaki-Einstein manifold must be spin (\cite[Proposition~2.6]{BGN03c}), Smale's classification of simply connected  $5$-manifolds will be enough for our purpose (cf. \cite{Barden}).

We  are now ready to state our main result.
\begin{Maintheorem*}
For each positive integer $n\geq 4$, the rational homology $5$-sphere $nM_2$ admits a Sasaki-Einstein metric.
\end{Maintheorem*}
Together with the works of Boyer, Galicki,  Koll\'ar and Nakamaye (\cite{BoNa10}, \cite{Ko05}, \cite{Ko06}), the Main Theorem completes the classification of  simply connected rational homology $5$-spheres that admit Sasaki-Einstein metrics.
\begin{theorem}
A simply connected rational homology $5$-sphere admits a (quasi-regular) Sasaki-Einstein metric if and only if it  is one of the following:
\begin{enumerate}
\item the $5$-sphere $S^5$;
\item $M_r$, where $r$ is a positive integer with $r\geq 2$ not divisible by $30$;
\item $2M_5$;
\item $2M_4$;
\item $2M_3, 3M_3, 4M_3$;
\item $nM_2$, where $n\geq 2$.
\end{enumerate}
\end{theorem}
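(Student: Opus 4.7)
The plan is, for each $n\ge 4$, to exhibit an explicit log del Pezzo pair $(S_n,\frac{m-1}{m}D_n)$ with $-(K_{S_n}+\frac{m-1}{m}D_n)$ ample such that, on the one hand, the link of the corresponding quasi-smooth weighted hypersurface in $\mathbb{P}(a_0,a_1,a_2,a_3)$ is diffeomorphic to $nM_2$, and on the other hand, $\delta(S_n,\frac{m-1}{m}D_n)>1$. Given these two ingredients, Theorem~\ref{theorem:delta} gives uniform K-stability of $(S_n,\frac{m-1}{m}D_n)$; Theorem~\ref{theorem:CDS} then produces a K\"ahler-Einstein edge metric on $S_n$ with angle $2\pi/m$ along $D_n$; and the Kobayashi-Boyer-Galicki-Koll\'ar construction promotes this edge metric to a Sasaki-Einstein metric on the Seifert $S^1$-bundle, which by construction is $nM_2$.

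I would carry out the topological step by invoking the Boyer-Galicki-Koll\'ar formulas for the integral homology and the fundamental group of the link of a quasi-smooth weighted hypersurface $Y_d\subset\mathbb{P}(a_0,a_1,a_2,a_3)$, expressed purely in terms of $(a_0,\ldots,a_3;d)$. The presence of $2$-torsion of rank $2n$ in $H_2$ forces a rigid pattern on the parity of the weights; solving the resulting combinatorial constraints gives an infinite family indexed by $n\ge 4$ whose link is exactly $nM_2$. The pair $(S_n,\frac{m-1}{m}D_n)$ is then read off from $Y_d$ by quotienting by the Reeb $\mathbb{C}^\times$-action, with $D_n$ coming from the appropriate branch stratum adapted to the weighted structure.

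The K-stability step is the substantive part of the argument. By the Fujita-Odaka definition, it is enough to show that for every sufficiently large $m'$ and every $m'$-basis type divisor $D'$ on $S_n$, the log canonical threshold $c_{S_n}(S_n,\frac{m-1}{m}D_n;D')$ strictly exceeds $1$. I would estimate it in two regimes: away from the singular points of $S_n$ and away from $D_n$, a standard Abban-Zhuang-style reduction to a pencil suffices; close to the singularities of $S_n$ and along $D_n$, inversion of adjunction reduces the question to a one-dimensional basis type computation on a component of $D_n$ or on an exceptional curve of a resolution, where the bound is controlled by $D_n\cdot(-K_{S_n}-\frac{m-1}{m}D_n)$ and by the discrepancies of the weighted blowups at the singular points.

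The principal obstacle is the uniformity in $n$. The $\alpha$-invariant criterion of Theorem~\ref{theorem:alpha} is not sharp enough for this family, which is precisely why $nM_2$ with $n\ge 4$ was left open by the Boyer-Galicki-Koll\'ar-Nakamaye program, so the finer $\delta$-invariant is forced. Since the weights of the ambient weighted projective space, the boundary index $m$, and the singularities of $S_n$ all vary with $n$, the log canonical threshold estimates must either be intrinsically uniform or complemented by individual analyses of small cases. I expect a few values $n=4,5,\dots$ to require separate log canonical threshold calculations before a general argument takes over for large $n$.
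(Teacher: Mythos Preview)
Your high-level architecture---pick a log del Pezzo pair whose branched cover has link $nM_2$, prove $\delta>1$, then chain Theorems~\ref{theorem:delta} and~\ref{theorem:CDS} with the Kobayashi--Boyer--Galicki--Koll\'ar construction---is exactly the paper's strategy. But the proposal stays at the level of a plan and diverges from the paper in the two places that carry all the weight. The paper does not search for weights: it fixes from the start $X_n\subset\mathbb{P}(2,2,2n,2n+1)$ of degree $4n+2$, whose well-formed quotient is $Y_n\subset\mathbb{P}(1,1,n,2n+1)$ with boundary $\tfrac{1}{2}C_w$ (so $m=2$ throughout), and the link identification $L_{X_n}\cong nM_2$ is a one-line citation (Lemma~\ref{lemma:Orlik}). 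More importantly, the $\delta$-estimate at the unique singular point $o_z$ is \emph{not} done by adjunction to a curve or by an Abban--Zhuang flag as you suggest; the paper lifts to the $\tfrac{1}{n}(1,1)$ chart and runs a multi-step Newton polygon argument (the chain of Claims in the proof of Theorem~\ref{theorem:singular-point}), iterating coordinate changes $x\mapsto x-Ay^\beta$ and tracking how the polygon of a basis-type divisor moves. This is the technical core, and it delivers the uniform bound $\delta\ge\tfrac{8n+8}{8n+7}$ for all $n\ge 4$ at once---so contrary to your expectation, no small cases need separate treatment. Your suggested tools are legitimate in general, but you would still owe the verification that they close for this specific pair, and that is not automatic.

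You also omit two pieces of the statement as written. The ``only if'' direction is not addressed at all; the paper dispatches it by citing Koll\'ar's obstructions \cite[Theorems~1.4 and~1.6]{Ko05}. And existence for every item in the list other than $nM_2$ with $n\ge 4$---that is, $S^5$, the $M_r$, $2M_5$, $2M_4$, $kM_3$ for $k\le 4$, and $2M_2$, $3M_2$---is taken from the literature (\cite{BG06}, \cite{Ko05}, \cite{Ko06}, \cite{BoNa10}); your proposal should at least flag these citations before restricting attention to the new case.
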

\begin{proof}
It is a consequence of  \cite[Theorems~1.4 and 1.6]{Ko05} that a simply connected  Sasaki-Einstein rational  homology $5$-sphere must be one of the manifolds listed in the statement. Conversely, existence of Sasaki-Einstein metrics on  the rational homology $5$-spheres in the list is verified as follows.
\begin{enumerate}
\item The standard metric on the sphere is a Sasaki-Einstein metric.
\item See  \cite[the remark in Proof~9.6, Example~8.6 (1), (2), (3), (4)]{Ko05} and \cite[Theorem~2]{BG06}.
\item  This follows from \cite[Example~8.6 (5)]{Ko05}. 
\item  This also follows from \cite[Example~8.6 (5)]{Ko05}.
\item See \cite[Example~19]{Ko06} for $2M_3$. See \cite[Example~8.6 (5)]{Ko05}  for $3M_3$ and $4M_3$.
\item The assertion  \cite[Theorem~A]{BoNa10} verifies existence of Sasaki-Einstein metrics on $2M_2$, $3M_2$, $5M_2$, $6M_2$ and $7M_2$. 
On the other hand, the Main Theorem confirms existence of Sasaki-Einstein metrics on  $nM_2$ for every $n\geq 4$.
\end{enumerate}
\end{proof}

\begin{remark}
Regular Sasaki-Einstein metrics on simply-connected 5-manifolds are completely classified (\cite{FK89}). In particular, the  $5$-sphere is the only simply-connected regular Sasaki-Einstein rational homology $5$-sphere.  No irregular Sasaki-Einstein structure exists on a simply connected rational homology $5$-sphere (\cite{Suess}). 
\end{remark}

\section{Strategy for the proof of the Main Theorem}

The proof of the Main Theorem is based on the method introduced by Kobayashi and  developed  by Boyer, Galicki and Koll\'ar.
Our new ingredient  added to this method is to use the $\delta$-invariant instead of the $\alpha$-invariant.  Even though it is difficult to compute or estimate both the invariants in general, a few  methods have been developed well enough so that $\delta$-invariants can be estimated effectively on surfaces with at worst quotient singularities.

Let $X$ be a quasi-smooth hypersurface  in a weighted projective space $\mathbb{P}(\mathbf{w})=\mathbb{P}(a_0, a_1, \ldots, a_n)$ defined by a quasi-homogeneous polynomial $F(z_0,z_1,\ldots, z_n)$ in variables
$z_0, \ldots, z_n$ with weights $\mathrm{wt}(z_i)=a_i$. The equation $F(z_0,z_1,\ldots, z_n)=0$ also defines a hypersurface $\widehat{X}$ in $\mathbb{C}^{n+1}$ smooth outside the origin.  The link of $X$ is defined by the intersection 
\[L_X=S^{2n+1}_\mathbf{w}\cap \widehat{X},\]
where $S^{2n+1}_\mathbf{w}$ is the unit sphere centred at the origin in $\mathbb{C}^{n+1}$ with the Sasakian structure induced from the weight $\mathbf{w}=(a_0, a_1, \ldots, a_n)$ (see \cite[\S~1]{BG01} \cite[Example]{Takahashi}).
This is  a smooth compact  manifold of dimension $2n-1$.  It is simply-connected if $n\geq 3$ (\cite[Theorem~5.2]{M68}).
The situation can be diagrammed as follows (\cite{BoNa10}):
$$
\xymatrix{
L_X\ar@{->}[d]\ar@{^{(}->}[rr]&&S^{2n+1}_{\mathbf{w}}\ar@{->}[d]\\%
X\ar@{^{(}->}[rr]&&\mathbb{P}(\mathbf{w})}
$$
where the horizontal arrows are Sasakian and K\"ahlerian embeddings, respectively, and the vertical arrows are $S^1$ orbibundles and orbifold Riemannian submersions. 

Put $m=\mathrm{gcd}(a_1,\ldots, a_n)$. Suppose that $m>1$ and $\mathrm{gcd}(a_0, a_1, \ldots, a_{i-1}, \widehat{a_i}, a_{i+1}, \ldots, a_n)=1$ for each  $i=1,\ldots, n.$ Also set $b_0=a_0$ and $b_i=\frac{a_i}{m}$ for $i=1,\ldots, n.$  We also suppose that $\deg_{\mathbf{w}}(F)-\sum a_i<0.$ In other words, $X$ is a Fano orbifold.

There is a quasi-homogeneous polynomial $G(x_0,\ldots, x_n)$ in variables
$x_0, \ldots, x_n$ with weights $\mathrm{wt}(x_i)=b_i$ such that $F(z_0,z_1,\ldots, z_n)=G(z_0^d,z_1,\ldots, z_n).$ The equation $G(x_0,\ldots,x_n)=0$ defines a well-formed quasi-smooth hypersurface $Y$ in $\mathbb{P}(b_0, b_1, \ldots, b_n)$.    Denote by $D$ the divisor on $Y$ cut by $x_0=0$. 

\begin{lemma}[{\cite{BG00}, \cite{BGN03c}, \cite{BGK05}, \cite[Theorem~5]{Kob63}}]\label{lemma:BGK-method}
If there is a K\"ahler-Einstein edge metric on $Y$ with angle $\frac{2\pi}{m}$ along the divisor  $D$, then there is a Sasaki-Einstein metric on the link $L_X$ of $X$.
\end{lemma}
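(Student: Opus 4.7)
The plan is to follow the Kobayashi--Boyer--Galicki--Koll\'ar template: reinterpret the K\"ahler-Einstein edge metric as a genuine orbifold K\"ahler-Einstein metric, pull it back along an explicit cyclic cover to obtain a K\"ahler-Einstein metric on the Fano orbifold $X$, and finally identify $L_X$ with the associated $S^1$-orbibundle that then carries the Sasaki-Einstein structure.

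Concretely, the identity $F(z_0, z_1, \ldots, z_n) = G(z_0^m, z_1, \ldots, z_n)$ exhibits the assignment $[z_0 : z_1 : \cdots : z_n] \mapsto [z_0^m : z_1 : \cdots : z_n]$ as a degree-$m$ cyclic map $\mathbb{P}(\mathbf{w}) \to \mathbb{P}(\mathbf{b})$ whose restriction $\Phi : X \to Y$ is \'etale over $Y \setminus D$ and ramified to order $m$ along $D$; the coprimality hypotheses $\gcd(a_0, a_1, \ldots, \widehat{a_i}, \ldots, a_n) = 1$ force in particular $\gcd(a_0, m) = 1$, which is what makes this cover behave well with respect to the orbifold structures on both weighted projective spaces. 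A K\"ahler-Einstein edge metric on $(Y, D)$ with cone angle $2\pi/m$ is, by the Demailly--Koll\'ar dictionary, an orbifold K\"ahler-Einstein metric on the Fano orbifold $(Y, \frac{m-1}{m} D)$, and the ramification formula
\[
K_X = \Phi^*\!\left( K_Y + \tfrac{m-1}{m} D \right)
\]
shows that this metric pulls back to an orbifold K\"ahler-Einstein metric on $X$: the conical defect of angle $2\pi/m$ along $D$ is exactly absorbed by the $m$-to-$1$ branching.

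Finally, the link $L_X = S^{2n+1}_\mathbf{w} \cap \widehat{X}$ is the total space of the unit $S^1$-orbibundle over $X$ associated with the orbifold anticanonical sheaf, and Kobayashi's circle-bundle construction, in the orbifold formulation of Boyer, Galicki and Koll\'ar, then promotes the positive orbifold K\"ahler-Einstein metric on $X$ to a Sasaki-Einstein metric on $L_X$ whose transverse K\"ahler structure on the leaf space of the Reeb foliation recovers the metric on $X$. The main obstacle in the plan is the orbifold bookkeeping in the middle step: one must verify that the quasi-smoothness of $X$ together with the coprimality hypotheses really do match up the orbifold structures on $X$ and $(Y, \tfrac{m-1}{m} D)$, so that the pulled-back edge metric is a genuine orbifold metric on $X$ at \emph{every} orbifold point, not merely on the \'etale locus. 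Once this compatibility is confirmed, the Kobayashi construction applies unchanged.
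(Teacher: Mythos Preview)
The paper does not supply its own proof of this lemma; it is stated with attributions to Kobayashi and to Boyer--Galicki--Koll\'ar and left at that. Your sketch is essentially the argument one assembles from those references, and its logical skeleton is correct: a K\"ahler--Einstein edge metric on $(Y,D)$ with angle $2\pi/m$ is by definition an orbifold K\"ahler--Einstein metric on the log pair $\bigl(Y,\tfrac{m-1}{m}D\bigr)$, this log pair is the same orbifold as $X$ sitting inside the non-well-formed $\mathbb{P}(\mathbf{w})$, and the Boyer--Galicki--Koll\'ar version of Kobayashi's construction then yields the Sasaki--Einstein metric on the link.

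Two imprecisions are worth noting, though neither breaks the argument. First, the map $[z_0:\cdots:z_n]\mapsto[z_0^m:z_1:\cdots:z_n]$ from $\mathbb{P}(\mathbf{w})$ to $\mathbb{P}(\mathbf{b})$ has degree~$1$, not~$m$, as a map of varieties: non-well-formedness of $\mathbb{P}(\mathbf{w})$ means precisely that the $m$ apparent preimages of a generic point are identified by the residual $\mu_m$ inside the $\mathbb{C}^*$-quotient. The correct picture is that $X$ and $Y$ share the same underlying variety, with $X$ carrying extra $\mathbb{Z}/m$ orbifold structure along $\{x_0=0\}$; your formula $K_X=\Phi^*\bigl(K_Y+\tfrac{m-1}{m}D\bigr)$ is then simply the identification of the orbifold canonical class of $X$ with the log canonical class of $\bigl(Y,\tfrac{m-1}{m}D\bigr)$, and no genuine ``pullback of a metric along a branched cover'' is needed. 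Second, $L_X$ is the $S^1$-orbibundle associated with $\mathcal{O}_X(1)$ coming from the weighted $\mathbb{C}^*$-action, not with the anticanonical sheaf; these differ by the Fano index. This is harmless, since the Boyer--Galicki--Koll\'ar theorem only asks that the bundle class be a positive rational multiple of $c_1(X)$.
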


We now consider  a specific quasi-smooth hypersurface $X_n$ of degree $4n+2$ in $\mathbb{P}(2,2, 2n, 2n+1)$, where $n$ is a positive integer. We use quasi-homogeneous coordinates $x$, $y$, $z$, $w$ with weights $\mathrm{wt}(x)=\mathrm{wt}(y)=2$, $\mathrm{wt}(z)=2n$ and $\mathrm{wt}(w)=2n+1$.
By suitable coordinate changes,
$X_n$ may be assumed to be  given by
\[
w^2-z^2x-zr_{n+1}(x,y)-r_{2n+1}(x,y)=0,
\]
where $r_{n+1}$ and $r_{2n+1}$ are homogeneous polynomials of degrees $n+1$ and $2n+1$, respectively, in the variables $x, y$. Note that 
either $r_{n+1}$ contains $y^{n+1}$ or $r_{2n+1}$ contains $y^{2n+1}$  due to the quasi-smoothness of $X_n$.

Let $Y_n$ be the hypersurface  in $\mathbb{P}(1,1, n, 2n+1)$ defined by 
\[
w-z^2x-zr_{n+1}(x,y)-r_{2n+1}(x,y)=0,
\]
where we use the same notation for quasi-homogeneous coordinates as in $\mathbb{P}(2,2, 2n, 2n+1)$, abusing the notation.
Let $C_w$ be the curve in $Y_{n}$ that is cut out by the equation $w=0$.
Then the curve $C_{w}$ is reduced and irreducible. The log pair  \begin{equation}\label{the pair} \left(Y_n,\frac{1}{2}C_w\right)\end{equation}
is a log del Pezzo surface that works for the Main Theorem.

\begin{lemma}\label{lemma:Orlik}
The link of the surface  $X_n$ is $nM_2$.
\end{lemma}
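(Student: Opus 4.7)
The plan is to invoke Smale's classification of closed simply connected spin $5$-manifolds, by which such a manifold is determined up to diffeomorphism by its integral second homology. Consequently, it suffices to verify that $L_{X_n}$ is simply connected, carries a spin structure, and satisfies $H_2(L_{X_n},\mathbb{Z})\cong(\mathbb{Z}/2\mathbb{Z})^{2n}$.

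Simple connectivity is already pointed out in the paragraph preceding Lemma~\ref{lemma:BGK-method}: the link of an isolated hypersurface singularity in $\mathbb{C}^4$ is simply connected \cite{M68}. For the spin condition, Milnor's theorem that the link of any isolated complex hypersurface singularity is stably parallelizable forces $w_2(L_{X_n})=0$, and combined with simple connectivity this produces a unique spin structure on $L_{X_n}$.

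To compute $H_2$ I would feed the Milnor fibration of the defining quasi-homogeneous polynomial
\[F(x,y,z,w)=w^2-z^2x-zr_{n+1}(x,y)-r_{2n+1}(x,y)\]
into its Wang exact sequence. Since the Milnor fibre $\mathcal{F}$ is $2$-connected with $H_3(\mathcal{F})\cong \mathbb{Z}^{\mu}$, the sequence collapses to
\[0\longrightarrow H_3(L_{X_n})\longrightarrow H_3(\mathcal{F})\xrightarrow{\,h_{\ast}-I\,}H_3(\mathcal{F})\longrightarrow H_2(L_{X_n})\longrightarrow 0,\]
where $h_{\ast}$ denotes the geometric monodromy. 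The Milnor--Orlik formula expresses the characteristic polynomial of $h_{\ast}$ as an explicit product of cyclotomic polynomials determined by the weights $(2,2,2n,2n+1)$ and the degree $4n+2$; inspection of this product shows that $t=1$ is not among its roots, so $H_3(L_{X_n})=0$ and $H_2(L_{X_n})\cong\operatorname{coker}(h_{\ast}-I)$ is pure torsion. Orlik's formula for the elementary divisors of $h_{\ast}-I$, specialised to these numerical data, should then yield $H_2(L_{X_n},\mathbb{Z})\cong(\mathbb{Z}/2\mathbb{Z})^{2n}$, after which Smale's theorem identifies $L_{X_n}$ with $nM_2$.

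The most delicate step is the torsion calculation. Orlik's formula for the Smith normal form of $h_{\ast}-I$ is only known under a technical divisibility hypothesis on the weights; to bypass this, I would first deform $F$ within the family of quasi-smooth hypersurfaces of degree $4n+2$ in $\mathbb{P}(2,2,2n,2n+1)$ to a more symmetric normal form (for instance, taking $r_{n+1}\equiv 0$ and $r_{2n+1}=x^{2n+1}+y^{2n+1}$), which leaves the diffeomorphism type of the link unchanged, and then carry out the explicit Smith-normal-form computation on that concrete model.
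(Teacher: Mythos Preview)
Your plan is sound and is essentially the route the paper takes first: the paper simply invokes \cite[Theorem~7.1]{BGN03b} together with \cite[Corollary]{MO70}, noting that the former repackages Savel'ev's direct computations \cite{Sav79}. What you spell out (Smale's classification, simple connectivity from \cite{M68}, spin from stable parallelisability, the Wang sequence, the Milnor--Orlik characteristic polynomial, and then the torsion of $h_\ast-I$) is exactly the content behind those citations, and your caution about the status of Orlik's divisor formula is well placed. Your deformation workaround is legitimate: the quasi-smooth locus in the parameter space is connected, and your target $w^2-z^2x-x^{2n+1}-y^{2n+1}$ is a Thom--Sebastiani sum of $w^2$, a $D_{2n+2}$-singularity in $(z,x)$, and $y^{2n+1}$, so the integral monodromy is explicitly computable.

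The paper also records a second, shorter argument that you do not mention and that avoids the Milnor-fibre machinery entirely. By \cite[Theorem~5.7]{Ko05}, the second homology of a smooth Seifert $S^1$-bundle over a log del Pezzo orbifold is read off from the Picard rank of the base and the genera and multiplicities of the branch curves. Here $L_{X_n}\to Y_n$ is such a bundle with a single branch curve $C_w$ of multiplicity $2$; since $Y_n\cong\mathbb{P}(1,1,n)$ has Picard rank~$1$ and $C_w$ is a smooth curve of genus $n$, Koll\'ar's formula gives $H_2(L_{X_n},\mathbb{Z})\cong(\mathbb{Z}/2)^{2n}$ directly. This bypasses both the Wang sequence and the Orlik conjecture, at the cost of quoting a structure theorem for Seifert $5$-manifolds rather than doing the singularity-theoretic computation by hand.
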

\begin{proof}
This immediately follows from \cite[Theorem~7.1]{BGN03b}  and  \cite[Corollary]{MO70}.  The former is a reformulation  of the results of Savel'ev in \cite{Sav79}. The assertion also follows from \cite[Theorem~5.7]{Ko05}. Indeed, the genus of the curve~$C_w$ is $n$ and the Picard rank of $Y_n$ is $1$.
\end{proof}

It has long been known that \eqref{the pair} is a candidate that yields a Sasaki-Einstein metric on $nM_2$ (\cite[Example~10.3.7 and Open Problem~11.4.1]{BoyerBook}, \cite[Example~19]{Ko06}). The reason why this candidate had not been able to be confirmed as a Sasaki-Einstein metric producer on  $nM_2$   is that we did not have any method to determine whether~$\left(Y_n,\frac{1}{2}C_w\right)$ admits an orbifold K\"ahler-Einstein metric. In particular, the $\alpha$-invariant method is not sharp enough to do this job. Indeed, $\alpha\left(Y_n,\frac{1}{2}C_w\right)$ is at most $\frac{2}{3}$, which is too small to apply Theorem~\ref{theorem:alpha}.  However, the $\delta$-invariant  is decisive, so  that it allows  us to determine  existence of orbifold K\"ahler-Einstein metric on $\left(Y_n,\frac{1}{2}C_w\right)$ through its uniform K-stability.

It follows from Lemmas~\ref{lemma:BGK-method}~and~\ref{lemma:Orlik} that for the proof of the Main Theorem it is enough to show that $(Y_n, \frac{1}{2}C_w)$ possesses an orbifold K\"ahler-Einstein metric.  Theorems~\ref{theorem:CDS} and \ref{theorem:delta} then imply that the following assertion completes the proof of the Main Theorem.
\begin{theorem}\label{theorem:main-computation}
For each $n\geq 4$, $$\delta\left(Y_n,\frac{1}{2}C_w\right)\geq\frac{8n+8}{8n+7}.$$
\end{theorem}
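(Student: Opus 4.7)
The plan is to apply the Abban--Zhuang valuative criterion iteratively, first on the log-crepant resolution and then on a further blow-up. Let $\pi: \mathbb{F}_n \to Y_n$ be the minimal resolution of the singular point $p_z = [0:0:1]$, with exceptional $(-n)$-curve $E$ (the negative section) and fiber class $\phi$. The log-crepant pullback gives $\tilde{\Delta} = \frac{1}{2}\tilde{C}_w + \frac{2n-3}{2n}E$ with $\tilde{C}_w \equiv (2n+1)\phi + 2E$, so $-K_{\mathbb{F}_n} - \tilde{\Delta} \equiv \frac{3}{2}\phi + \frac{3}{2n}E$ has self-intersection $\frac{9}{4n}$. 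Since $\delta(Y_n, \frac{1}{2}C_w) = \delta(\mathbb{F}_n, \tilde{\Delta})$, I would bound $\delta_p$ at each $p \in \mathbb{F}_n$ separately. The unique intersection point $q := E \cap \tilde{C}_w$ (transverse) also lies on the strict transform $\tilde{\phi}_0$ of the line $\{x = 0\} \subset Y_n$; this triple-curve coincidence at $q$ is the genuine source of difficulty.

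For $p \in \mathbb{F}_n \setminus \{q\}$, I would apply Abban--Zhuang with the flag $p \in \phi_p$ (the fiber through $p$). Here $A(\phi_p) = 1$, $S(\phi_p) = \tfrac{1}{2}$, and the Zariski decomposition of $L - t\phi_p$ has negative part $\tfrac{t}{n}E$, with $P(t) \cdot \phi_p = \frac{3-2t}{2n}$ on the range $[0, 3/2]$. Using the standard surface formula $S(W^{\phi_p}_{\bullet,\bullet}; p) = \frac{1}{L^2}\int_0^\tau \bigl[(P\cdot\phi_p)^2 + 2(P\cdot \phi_p)\,\mathrm{ord}_p(N|_{\phi_p})\bigr]dt$ and splitting cases by whether $p$ lies on $E$, on $\tilde{C}_w$, on $\tilde{\phi}_0$, or at a generic point, one finds $\delta_p \geq \tfrac{3}{2}$ in the worst case (namely $p \in E \setminus \{q\}$), well beyond the target $\frac{8n+8}{8n+7}$.

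For $p = q$, pass to the ordinary blow-up $\sigma: X' \to \mathbb{F}_n$ at $q$ with exceptional $G$, giving the crepant-equivalent pair $(X', \tilde{\Delta}')$ where $\tilde{\Delta}' = \frac{1}{2}\tilde{C}'_w + \frac{2n-3}{2n}\tilde{E}' + \frac{n-3}{2n}G$. Then $\delta_q(\mathbb{F}_n, \tilde{\Delta}) = \inf_{p' \in G} \delta_{p'}(X', \tilde{\Delta}')$. The essential computation is the Zariski decomposition of $M(t) := \sigma^*(-K-\tilde{\Delta}) - tG$: for $t \in [0, \frac{3}{2n}]$ the negative part is $\frac{t}{n+1}\tilde{E}'$, while at $t = \frac{3}{2n}$ the strict transform $\tilde{\phi}'_0$ enters the negative part with coefficient $t - \frac{3}{2n}$ (detected by $P(t) \cdot \tilde{\phi}'_0 = \frac{3}{2n} - t$), with pseudo-effective threshold $\tau = \frac{3(n+1)}{2n}$ at which $P(\tau)^2 = 0$. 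Integrating $P(t)^2$ across both regimes yields $S(G) = \frac{n+2}{2n}$, and $A(G) = 2 - \mathrm{mult}_q\tilde{\Delta} = \frac{n+3}{2n}$, giving the universal ratio $\frac{A(G)}{S(G)} = \frac{n+3}{n+2}$. Then applying Abban--Zhuang with flag $p' \in G$ at each of the three distinguished points $G \cap \tilde{E}'$, $G \cap \tilde{C}'_w$, $G \cap \tilde{\phi}'_0$ and at a generic $p' \in G$, the refined $S(W^G_{\bullet, \bullet}; p')$ is computed using the two-regime Zariski data, and one checks case by case that the refined estimate $\frac{A_{G,\mathrm{Diff}}(p')}{S(W^G; p')}$ always exceeds $\frac{n+3}{n+2}$ (respectively $\tfrac{3}{2}, n+1, 2, 2(n+1)$). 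Hence $\delta_q \geq \frac{n+3}{n+2}$, so $\delta(Y_n, \frac{1}{2}C_w) \geq \frac{n+3}{n+2}$; and the elementary inequality $(n+3)(8n+7) - (n+2)(8n+8) = 7n+5 > 0$ yields $\frac{n+3}{n+2} > \frac{8n+8}{8n+7}$.

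The main obstacle is the two-phase Zariski decomposition on $X'$: one must detect that $\tilde{\phi}'_0$ enters the negative part precisely at the threshold $t = \frac{3}{2n}$, which requires checking the intersection of every irreducible curve class with $P(t)$. The integrals $\int_0^\tau P(t)^2 dt$ and $\int_0^\tau (P(t)\cdot G)\,\mathrm{ord}_{p'}(N(t)|_G)dt$ must be split into the two regimes, and the algebraic coincidence that makes $P(t)^2$ a perfect square $\frac{1}{n+1}(t - \tau)^2$ on the second regime drives the clean value $S(G) = \frac{n+2}{2n}$. A single-step Abban--Zhuang application using only the flag $q \in E$ on $\mathbb{F}_n$ gives only $\delta_q \geq 1$ (since $A_{E,\mathrm{Diff}}(q)/S(W^E; q) = \tfrac{1}{2}/\tfrac{1}{2} = 1$), so the auxiliary blow-up is indispensable.
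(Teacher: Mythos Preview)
Your approach via the Abban--Zhuang method on the minimal resolution $\mathbb{F}_n$ is genuinely different from the paper's Newton-polygon analysis of basis-type divisors, and where it works it gives the sharper bound $\frac{n+3}{n+2}$. The computations I checked---the boundary coefficient $\frac{2n-3}{2n}$ on $E$, $S(\phi_p)=\tfrac12$, the two-regime Zariski decomposition on $X'$ with $P(t)^2=\frac{1}{n+1}(t-\tau)^2$ on the second regime, and $A(G)/S(G)=\frac{n+3}{n+2}$---are all correct.

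There is, however, a real gap in your treatment of points $p\notin\{q\}$. Since $\tilde C_w\equiv 2E+(2n+1)\phi$, the projection $\tilde C_w\to\mathbb{P}^1$ is a double cover of genus $n$, so by Riemann--Hurwitz it ramifies at exactly $2n+2$ points. At each such ramification point $p\in\tilde C_w$ the fiber $\phi_p$ meets $\tilde C_w$ with local multiplicity $2$, so the different on $\phi_p$ picks up $\tfrac12\cdot 2=1$ at $p$, giving $A_{\phi_p,\mathrm{Diff}}(p)=0$; the Abban--Zhuang estimate with the flag $p\in\phi_p$ then collapses to $0$, not $\geq\tfrac32$. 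Your case split ``$p\in\tilde C_w$'' silently assumed $\phi_p$ transverse to $\tilde C_w$. The paper confronts exactly this situation (``the curve $C$ must intersect $C_w$ at $p$ tangentially'') and resolves it by two ordinary blow-ups; you would need an analogous auxiliary blow-up at each ramification point, just as you already do at $q$. A secondary issue: your claim that $G\cap\tilde E'$, $G\cap\tilde C'_w$, $G\cap\tilde\phi'_0$ are three \emph{distinct} points on $G$ holds only when $r_{n+1}$ contains the monomial $y^{n+1}$; in the chart $(u,v)=(x/y,\,y^n)$ near $q$ one has $\tilde C_w:\,u+\tilde r_{n+1}(0)v+\tilde r_{2n+1}(0)v^2+\cdots=0$ and $\tilde\phi_0:\,u=0$, so if $\tilde r_{n+1}(0)=0$ these are tangent and your blow-up at $q$ does not separate them. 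This is harmless for the Main Theorem (one may choose such a $Y_n$), but Theorem~\ref{theorem:main-computation} as stated covers every quasi-smooth $Y_n$.
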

This will be verified in Section~\ref{section:delta-computing}.

\section{Preliminaries}
\label{section:basic-tool}

Let $f$ be a polynomial over $\mathbb{C}$ in variables $z_1,\ldots, z_n$. Assign  integral weights $w(z_i)$ to the variables $z_i$. Let $w(f)$ be the weighted multiplicity of $f$ at the origin, i.e., the lowest weight of the monomials occurring in $f$,  and let $f_w$ denote the weighted homogeneous leading term of $f$, i.e., the term of the monomials  in $f$ with the  weighted multiplicity of $f$.

 Let $g$ be a polynomial over $\mathbb{C}$ in  $z_2,\ldots, z_n$ and set
\[h(z_1,\ldots,z_n)=f(z_1+g(z_2,\ldots, z_n), z_2,\ldots, z_n).\]
It is clear that \[h_w(z_1,\ldots,z_n)=f_w(z_1+g(z_2,\ldots, z_n), z_2,\ldots, z_n)\]
if $z_1+g(z_2,\ldots, z_n)$ is quasi-homogeneous with respect to the given weights $w(z_1),\ldots, w(z_n)$.
Let $f_1,\ldots, f_\ell$ be polynomials  over $\mathbb{C}$  in  $z_1,\ldots, z_n$.  We easily see that 
\[\left(\prod_{i=1}^{\ell}f_i\right)_w=\prod_{i=1}^{\ell}(f_i)_w, \  \ \  w\left(\prod_{i=1}^{\ell}f_i\right)=\sum_{i=1}^{\ell}w(f_i)\]
with respect to the given weights $w(z_1),\ldots, w(z_n)$.

\begin{lemma}\label{c_0}
Let $f$ be a polynomial over $\mathbb{C}$ in variables $z_1,\ldots, z_n$. Assign integral
weights $w(z_i)$ to the variables~$z_i$ and let $w(f)$ be the weighted multiplicity of $f$.  Then
\begin{enumerate}
\item $c_0(f_w)\leq c_0(f)\leq \frac{\sum w(z_i)}{w(f)}$.
\item  If  the log pair $$\left(\mathbb{C}^n, \frac{\sum w(z_i)}{w(f)}\cdot \left(f_w=0\right)\right)$$ is log canonical outside the origin, then 
$ c_0(f)=\frac{\sum w(z_i)}{w(f)}$.
\end{enumerate}
\end{lemma}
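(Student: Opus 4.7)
The common tool for both parts is the weighted blow-up $\pi\colon Y\to\mathbb{C}^n$ of the origin with weights $(w_1,\dots,w_n)$, where $w_i=w(z_i)$, together with the $\mathbb{C}^*$-action $\sigma_t(z_1,\dots,z_n)=(t^{w_1}z_1,\dots,t^{w_n}z_n)$; throughout set $d=w(f)$ and $\lambda=(\sum w_i)/d$. For the upper bound in (1), the exceptional divisor $E$ of $\pi$ has discrepancy $\sum w_i-1$ with respect to $\mathbb{C}^n$ and satisfies $\mathrm{ord}_E(f)=d$, so for $(\mathbb{C}^n,c\,(f=0))$ to be log canonical at the origin one needs the discrepancy of $E$ in this pair, namely $\sum w_i-1-cd$, to be at least $-1$, which forces $c\le\lambda$; hence $c_0(f)\le\lambda$. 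For the inequality $c_0(f_w)\le c_0(f)$ I would degenerate $f$ to $f_w$ via $\sigma_t$: the expression $F(z,t)=t^{-d}f(t^{w_1}z_1,\dots,t^{w_n}z_n)$ is a polynomial in $(z,t)$ by the very definition of $d$, with $F(\cdot,1)=f$ and $F(\cdot,0)=f_w$. For $t\ne 0$ the map $\sigma_t$ is a biholomorphism of $\mathbb{C}^n$ fixing the origin, so $c_0(F(\cdot,t))=c_0(f)$, and lower semicontinuity of the log canonical threshold in the parameter $t$ then yields $c_0(f_w)=c_0(F(\cdot,0))\le c_0(f)$.

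Part (2) reduces, by combining the two inequalities in (1) into $c_0(f_w)\le c_0(f)\le\lambda$, to showing $c_0(f_w)\ge\lambda$; equivalently, that $(\mathbb{C}^n,\lambda(f_w=0))$ is log canonical at the origin. Again I would pull back by $\pi$: since $\lambda d=\sum w_i$, the discrepancy calculation from part (1) specializes to the identity
\[
\pi^*\bigl(K_{\mathbb{C}^n}+\lambda(f_w=0)\bigr)=K_Y+\lambda\widetilde{D}+E,
\]
with $\widetilde{D}$ the proper transform of $(f_w=0)$. So log canonicity at the origin becomes log canonicity of $(Y,\lambda\widetilde{D}+E)$ in a neighborhood of $E$. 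Points of $Y\setminus E$ map isomorphically to $\mathbb{C}^n\setminus\{0\}$ and are handled by hypothesis. At a point $p\in E$, I would work in the standard chart $\{z_i\ne 0\}$: on its smooth cyclic cover $\mathbb{C}^n\to\mathbb{C}^n/\mathbb{Z}_{w_i}$ with coordinates $(u,y_1,\dots,\widehat{y_i},\dots,y_n)$ satisfying $z_i=u^{w_i}$ and $z_j=u^{w_j}y_j$, the divisor $E$ pulls back to the smooth divisor $(u=0)$ and $\widetilde{D}$ pulls back to $(f_w(y_1,\dots,1,\dots,y_n)=0)$ with $1$ in the $i$-th slot. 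Inversion of adjunction along $E$ reduces log canonicity at $p$ to log canonicity of the pair $(\mathbb{C}^{n-1},\lambda(f_w(y_1,\dots,1,\dots,y_n)=0))$ at the image of $p$, and a transverse-slice argument using $\sigma_t$ identifies the latter with log canonicity of $(\mathbb{C}^n,\lambda(f_w=0))$ at a point on the $\mathbb{C}^*$-orbit through $p$ with $z_i\ne 0$, hence away from the origin, which is exactly what the hypothesis provides.

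The main subtlety lies in the cyclic quotient singularities of $Y$ along $E$: both the displayed discrepancy identity and the application of inversion of adjunction should strictly be read on the local smooth covers of the standard charts of the weighted blow-up and then descended $\mathbb{Z}_{w_i}$-equivariantly. This is the usual bookkeeping for weighted blow-ups and presents no essential difficulty once the charts are written out explicitly.
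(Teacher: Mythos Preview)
Your argument is correct and is precisely the standard proof: the paper itself does not argue the lemma but simply cites \cite[Propositions~8.13 and 8.14]{Ko97}, and what you have written is essentially the content of those propositions---the weighted blow-up gives the upper bound and the discrepancy identity for part~(2), the $\mathbb{C}^*$-degeneration plus semicontinuity of the log canonical threshold gives $c_0(f_w)\le c_0(f)$, and inversion of adjunction along $E$ together with the quasi-homogeneity of $f_w$ (making the pair locally a product along each $\mathbb{C}^*$-orbit off the origin) closes the loop. The only point to state a bit more carefully is the product/slice identification in the last step, but your remark about working on the smooth cyclic covers of the charts already indicates the right bookkeeping.
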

\begin{proof}
See \cite[Propositions~8.13 and 8.14]{Ko97}.
\end{proof}

Let $S$ be a surface with at most quotient singularities. We consider
 an irreducible and reduced curve $C$ on $S$ and  a point  $p$  of  the curve $C$. Let $D$ be an effective $\mathbb{Q}$-divisor on the surface~$S$.
We present here a few   well-known results concerning  log canonical singularities of the log pair~$(S, D)$.

\begin{lemma}
\label{lemma:mult-1}
Suppose that $p$ is a smooth point of the surface $S$.
If the log pair $(S,D)$ is not log canonical at $p$, then $\mathrm{mult}_p(D)>1$.
\end{lemma}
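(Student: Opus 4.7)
The plan is to prove the contrapositive: $\mathrm{mult}_p(D)\le 1$ implies $(S,D)$ is log canonical at $p$. (The curve $C$ in the set-up plays no role, so I ignore it.) The workhorse is a single blow-up of $p$, together with a standard induction on an embedded resolution.

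Let $\pi\colon\tilde S\to S$ be the blow-up of the smooth point $p$, with exceptional curve $E\cong\mathbb{P}^1$. The formulas $\pi^{\ast}D=\tilde D+\mathrm{mult}_p(D)\cdot E$ and $K_{\tilde S}=\pi^{\ast}K_S+E$ give
\[
K_{\tilde S}+\tilde D+(\mathrm{mult}_p(D)-1)E=\pi^{\ast}(K_S+D).
\]
Because $\mathrm{mult}_p(D)\le 1$, the log discrepancy $1-\mathrm{mult}_p(D)$ of $E$ is non-negative; since attaching a non-positive coefficient to a prime divisor can only raise log discrepancies of every divisor over $\tilde S$, log canonicity of $(S,D)$ at $p$ is equivalent to log canonicity of $(\tilde S,\tilde D)$ at each point of $E$. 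I would then bound multiplicities on $E$ by intersection theory: from $\pi^{\ast}D\cdot E=0$ and $E^2=-1$ one gets $\tilde D\cdot E=\mathrm{mult}_p(D)\le 1$, and since $E$ is smooth the local intersection number $(\tilde D\cdot E)_q$ dominates $\mathrm{mult}_q(\tilde D)$, so the global bound forces $\mathrm{mult}_q(\tilde D)\le 1$ at every $q\in E$.

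Finally, I would iterate the same procedure at any point of the strict transform where the boundary is not yet simple normal crossing. At each stage the intersection-theoretic bound propagates: the new exceptional curve meets the new strict transform with total intersection equal to the ambient multiplicity, which is $\le 1$ by induction, so the multiplicity bound persists. Embedded resolution of a curve on a smooth surface terminates after finitely many such blow-ups, ending with an snc boundary whose coefficients are all $\le 1$; such a pair is log canonical. I do not foresee a genuine obstacle here, since the only non-trivial ingredient is the standard multiplicity-drop argument for embedded resolution; the whole lemma is classical and can equivalently be quoted from \cite[Proposition~8.5]{Ko97}.
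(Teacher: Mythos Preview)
Your argument is correct. The paper itself gives no proof at all: it simply records the statement as a well-known fact and points to \cite[Proposition~9.5.13]{Laz-positivity-in-AG}. So you have supplied strictly more than the paper does, namely an actual outline of the classical blow-up/induction proof.

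One small inaccuracy worth flagging: after the first blow-up you assert that log canonicity of $(S,D)$ at $p$ is \emph{equivalent} to log canonicity of $(\tilde S,\tilde D)$ along $E$. In fact only the implication you actually use is true: if $(\tilde S,\tilde D)$ is log canonical along $E$ then, since the coefficient $\mathrm{mult}_p(D)-1$ of $E$ is $\le 0$, the pair $(\tilde S,\tilde D+(\mathrm{mult}_p(D)-1)E)$ has discrepancies at least as large, hence is also log canonical, and this is equivalent to $(S,D)$ being log canonical at $p$. The converse direction can fail, but you never invoke it. With that adjustment, the rest---propagating the bound $\tilde D\cdot E=\mathrm{mult}_p(D)\le 1$ to $\mathrm{mult}_q(\tilde D)\le 1$ and terminating via embedded resolution of the curve $\mathrm{Supp}(D)$---is exactly the standard argument, and your reference to \cite{Ko97} is apt.
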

\begin{proof}
This is a well-known fact. See \cite[Proposition~9.5.13]{Laz-positivity-in-AG}, for instance.
\end{proof}
 
This immediately implies the following.

\begin{corollary}
\label{corollary:mult-1}
Suppose that $p$ is a smooth point of the surface $S$ and  the log pair $(S,D)$ is not log canonical at $p$.
If $C$ is not contained in the support of the divisor $D$,
then $D\cdot C>1$.
\end{corollary}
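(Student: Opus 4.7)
The plan is to derive this as an immediate consequence of Lemma (the preceding multiplicity lower bound), combined with the standard local-intersection estimate on a surface at a smooth point. Since $p$ is smooth on $S$, the intersection theory near $p$ is the classical one, so we may work locally in a smooth chart and then globalize by nonnegativity.

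First I would apply the previous lemma directly: because $(S,D)$ fails to be log canonical at the smooth point $p$, we get $\mathrm{mult}_p(D) > 1$. Next, since $C$ is irreducible and reduced and \emph{not} contained in $\mathrm{Supp}(D)$, the divisors $C$ and $D$ share no common components, so the intersection product $D \cdot C$ is well-defined as a finite sum of nonnegative local intersection numbers $(D \cdot C)_q$ over $q \in C \cap \mathrm{Supp}(D)$. In particular, discarding all contributions away from $p$ gives
\[
D \cdot C \;\geq\; (D \cdot C)_p.
\]

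The remaining step is the standard inequality $(D \cdot C)_p \geq \mathrm{mult}_p(D) \cdot \mathrm{mult}_p(C)$, which is valid because $p$ is a smooth point of $S$ (so the local ring $\mathcal{O}_{S,p}$ is a regular two-dimensional local ring, and intersection multiplicity is given by length of the quotient by the ideal generated by local equations of $D$ and $C$). Since $p \in C$ by hypothesis, $\mathrm{mult}_p(C) \geq 1$, and hence
\[
D \cdot C \;\geq\; (D \cdot C)_p \;\geq\; \mathrm{mult}_p(D)\cdot \mathrm{mult}_p(C) \;\geq\; \mathrm{mult}_p(D) \;>\; 1.
\]

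There is essentially no obstacle here — the argument is a one-line chain of inequalities — but the one point worth flagging is that $S$ is allowed to have quotient singularities elsewhere. This does not affect the argument because the bound $(D\cdot C)_p \geq \mathrm{mult}_p(D)\mathrm{mult}_p(C)$ is a purely local statement at the smooth point $p$, and the global intersection number on a normal surface decomposes as a sum of nonnegative local contributions whenever the two divisors share no common component. So the only real input is Lemma, and the corollary follows.
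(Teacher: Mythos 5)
Your argument is correct, and it is essentially the approach the paper intends: the paper states the corollary as an immediate consequence of Lemma~\ref{lemma:mult-1}, and your chain $D\cdot C\geq (D\cdot C)_p\geq \mathrm{mult}_p(D)\,\mathrm{mult}_p(C)\geq \mathrm{mult}_p(D)>1$ (using $p\in C$ from the standing hypotheses of the section) is exactly the elementary local-intersection estimate that makes it immediate.
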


In general, the curve $C$ may be contained in the support of the divisor $D$. We write
$$
D=rC+\Delta,
$$
where $r$ is a non-negative rational number and $\Delta$ is an effective $\mathbb{Q}$-divisor on $S$ whose support does not contain the curve $C$.

\begin{lemma}
\label{lemma:inversion-of-adjunction}
Suppose that the log pair $(S,rC+\Delta)$ with $r\leq 1$ is not log canonical at $p$. If the surface $S$  and the curve $C$ are smooth at  $p$, then $$
C\cdot\Delta\geq\big(C\cdot\Delta\big)_p>1,
$$
where $\big(C\cdot\Delta\big)_p$ is the local intersection number of $C$ and $\Delta$ at $p$.
\end{lemma}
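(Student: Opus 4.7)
The plan is to reduce the statement to a local computation on a smooth surface and then apply inversion of adjunction along the smooth curve $C$.

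First, I would observe that since $\Delta$ is effective and $r \leq 1$, the log pair $(S, C + \Delta)$ is at least as bad as $(S, rC + \Delta)$, so it is also not log canonical at $p$. In particular we may assume $r = 1$. Since $S$ is smooth at $p$, the question of log canonicity at $p$ is purely local at a smooth point of a surface, so we may shrink $S$ and work in a neighborhood of $p$ in which $S$ is smooth and $C$ is a smooth (hence Cartier) curve.

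Next, I would invoke (two-dimensional) inversion of adjunction: for a smooth curve $C$ on a smooth surface $S$ and an effective $\mathbb{Q}$-divisor $\Delta$ whose support does not contain $C$, the pair $(S, C + \Delta)$ is log canonical at $p$ if and only if $(C, \Delta|_C)$ is log canonical at $p$. This is a classical consequence of the Connectedness Theorem of Shokurov--Koll\'ar (or can be proven directly by a single blow-up at $p$ in dimension two). Applying it in the contrapositive gives that $(C, \Delta|_C)$ is not log canonical at $p$.

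Then, since $C$ is a smooth curve at $p$, the restriction $\Delta|_C$ is a well-defined effective $\mathbb{Q}$-divisor on $C$, and log canonicity at the smooth point $p$ of the one-dimensional pair simply means that the coefficient of $p$ in $\Delta|_C$ is at most $1$. Failure of log canonicity therefore gives
\[
(C \cdot \Delta)_p = \mathrm{mult}_p(\Delta|_C) > 1.
\]
Finally, $C \cdot \Delta = \sum_{q \in C} (C \cdot \Delta)_q \geq (C \cdot \Delta)_p$, since all local intersection numbers are non-negative because $\Delta$ is effective and $C$ is not in its support.

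The only subtle point is citing the correct form of inversion of adjunction; in dimension two on a smooth ambient surface this is elementary (a one-step weighted blow-up suffices), so no real obstacle is expected. The statement is essentially a standard tool, and the computation is mechanical once the local reduction to a smooth surface is made.
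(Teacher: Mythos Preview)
Your proposal is correct and follows essentially the same approach as the paper, which simply cites inversion of adjunction (\cite[Theorem~7.5]{Ko97}) without further elaboration. You have spelled out the standard details---the reduction to $r=1$, the local reduction, and the identification of $(C\cdot\Delta)_p$ with the coefficient of $p$ in $\Delta|_C$---but the underlying argument is identical.
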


\begin{proof}
This immediately follows from Inversion of adjunction (see \cite[Theorem~7.5]{Ko97}, for instance).
\end{proof}

Let $\phi\colon\widetilde{S}\to S$ be the blow up at a smooth point $p$
and let~$E$ be the exceptional curve of the morphism $\phi$.
Then $\widetilde{S}$ has at most  quotient singularities and we have
$$
K_{\widetilde{S}}=\phi^*\left(K_S\right)+E.
$$
Denote by $\widetilde{D}$ the proper transform of the divisor $D$ via $\phi$. Then
$$
\widetilde{D}=\phi^*\left(D\right)-mE
$$
for some  non-negative rational number $m$ and
$$
K_{\widetilde{S}}+\widetilde{D}+\left(m-1\right)E=\phi^*\left(K_S+D\right).
$$
The log pair $(S,D)$ is log canonical at $p$ if and only if the log pair
$$
\left(\widetilde{S},\widetilde{D}+\left(m-1\right)E\right)
$$
is log canonical along the curve $E$.

In the present article, we deal with surfaces with quotient singularities. However, the statements mentioned so far require smoothness of the ambient space for us to utilize them to the fullest. Fortunately, the following assertion enables us to apply the same statements without any obstruction since our case has  a natural  finite morphism of a germ of the origin in $\mathbb{C}^2$ to a germ of a quotient singularity that is ramified only at a point.
\begin{proposition}[{\cite[Proposition~1.7]{Reid}, \cite[Proposition~3.16]{Ko97}}]\label{proposition:lc-by-finite-morphism}
Let $\varphi: Y\to X$ be a finite morphism between normal varieties and assume that $\varphi$ is unramified outside a set of codimension two. Then
a log pair $(X, D)$ is log canonical (resp. Kawamata log terminal) if and only if the log pair $(Y, \varphi^*D)$ is
log canonical (resp. Kawamata log terminal).
\end{proposition}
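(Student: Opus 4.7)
The plan is to bound $\delta_m(Y_n,\tfrac12 C_w)$ from below uniformly for all sufficiently large $m$: given any $m$-basis type divisor $D$, I will show that $(Y_n,\tfrac12 C_w+\tfrac{8n+8}{8n+7}D)$ is log canonical up to an $o(1)$ slack, then take $\limsup$. A preliminary observation: since the defining equation of $Y_n$ is linear in $w$, the projection $[x:y:z:w]\mapsto[x:y:z]$ identifies $Y_n\cong\mathbb{P}(1,1,n)$; under this isomorphism $C_w$ is the degree $2n+1$ curve cut out by $z^{2}x+zr_{n+1}(x,y)+r_{2n+1}(x,y)=0$, and $-(K_{Y_n}+\tfrac12 C_w)\sim_{\mathbb{Q}}\tfrac{3}{2}H$ where $H$ is the hyperplane class. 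The unique singular point of $Y_n$ is the cyclic quotient $P_z=[0:0:1]$ of type $\tfrac{1}{n}(1,1)$, and by construction $P_z\in C_w$.

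The key input is the standard asymptotic estimate: for any divisorial valuation $v$ on $Y_n$, the order of vanishing $\mathrm{ord}_v(D)$ is bounded above, up to an $o(1)$ error as $m\to\infty$, by the $S$-invariant
\[S_{L}(v)=\frac{1}{\mathrm{vol}(L)}\int_0^\infty \mathrm{vol}(L-tv)\,dt,\qquad L=-(K_{Y_n}+\tfrac12 C_w).\]
At a smooth point $p\notin C_w$, a bound on $\mathrm{mult}_p(D)$ is obtained via a pencil of low-degree curves through $p$, and Corollary~\ref{corollary:mult-1} applied to a generic member of an appropriate linear system closes the case. At a smooth point $p\in C_w$, I decompose $D=aC_w+\Delta$ with $C_w\not\subset\mathrm{Supp}(\Delta)$; since $D\sim_{\mathbb{Q}}\tfrac{3}{2}H$ and $C_w\sim(2n+1)H$, the coefficient satisfies $a\le\tfrac{3}{2(2n+1)}$, so $\tfrac12+\tfrac{8n+8}{8n+7}a<1$ and Lemma~\ref{lemma:inversion-of-adjunction} reduces the problem to bounding the local intersection $(C_w\cdot\Delta)_p$, which is controlled by a basis-type estimate on the restricted series $H^0(Y_n,\mathcal{O}(mL))|_{C_w}$ together with the fact that $C_w$ has genus $n\ge 4$.

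The decisive case is $p=P_z$. By Proposition~\ref{proposition:lc-by-finite-morphism}, passing to the local uniformizing $n$-fold cover $\varphi\colon\mathbb{C}^2\to\mathbb{C}^2/\mathbb{Z}_n$ it suffices to check log canonicity of $\varphi^{*}\!\left(\tfrac12 C_w+\tfrac{8n+8}{8n+7}D\right)$ at the origin. On the chart $\{z\ne0\}$, $C_w$ is cut out by $x+r_{n+1}(x,y)+r_{2n+1}(x,y)$; whether the weighted leading term at the origin (for weights $w(x)=w(y)=1$ or a suitable refinement determined by $y^{n+1}\in r_{n+1}$ vs. $y^{2n+1}\in r_{2n+1}$) is $x$, $y^{n+1}$, or a combination is dictated by the coefficients of $r_{n+1}, r_{2n+1}$, and Lemma~\ref{c_0} yields the corresponding log canonical threshold for $\varphi^{*}C_w$. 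Combining this with the asymptotic bound on the weighted multiplicity of $\varphi^{*}D$ at the origin, obtained from an explicit volume computation of the $S$-invariant of the $(1,1)$-weighted blow-up of $P_z$ against $L=\tfrac{3}{2}H$, produces the ratio $\tfrac{8n+7}{8n+8}$; this is precisely the source of the bound in the statement.

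The main obstacle is this sharp analysis at $P_z$: the orbifold weights, the tangent direction of $C_w$ at $P_z$, and the boundary $\tfrac12 C_w$ interact nontrivially, so any slack in estimating either the weighted multiplicity of $\varphi^{*}D$ or the log canonical threshold of $\varphi^{*}C_w$ is immediately fatal. A secondary difficulty is handling the finitely many points where $C_w$ acquires a singularity from coincidences in the coefficients of $r_{n+1},r_{2n+1}$; at such a point a single blowup as described in Section~\ref{section:basic-tool} suffices to rerun the multiplicity comparison on the resulting surface. The hypothesis $n\ge 4$ enters because for smaller $n$ the linear series $|mL|$ does not have enough sections vanishing along $C_w$ to the required order, and the $S$-invariant at $P_z$ fails to meet the needed inequality.
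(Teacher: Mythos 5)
Your proposal does not address the stated proposition at all. The statement to be proved is the covering trick of Reid and Koll\'ar: for a finite morphism $\varphi\colon Y\to X$ of normal varieties that is unramified in codimension one, $(X,D)$ is log canonical (resp.\ klt) if and only if $(Y,\varphi^*D)$ is. What you have written instead is an outline of the proof of Theorem~\ref{theorem:main-computation} (the $\delta$-invariant estimate for $(Y_n,\tfrac12 C_w)$); indeed you even \emph{invoke} Proposition~\ref{proposition:lc-by-finite-morphism} as a tool in your third paragraph, which makes the circularity plain. Nothing in your argument involves a general finite morphism, the ramification formula, or a comparison of discrepancies, so the proposition is left entirely unproved.

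A correct proof is short. Since $\varphi$ is unramified in codimension one, $K_Y=\varphi^*K_X$, hence $K_Y+\varphi^*D=\varphi^*(K_X+D)$. Choose log resolutions $\rho\colon X'\to X$ and $\pi\colon Y'\to Y$ fitting into a commutative square with a finite morphism $\varphi'\colon Y'\to X'$; if $E$ is a prime divisor on $Y'$ lying over a prime divisor $F$ on $X'$ with ramification index $r_E$, then the Hurwitz formula $K_{Y'}=\varphi'^*K_{X'}+\sum(r_E-1)E$ together with $\pi^*\varphi^*(K_X+D)=\varphi'^*\rho^*(K_X+D)$ gives the discrepancy relation
\[
a\bigl(E;\,Y,\varphi^*D\bigr)+1 \;=\; r_E\bigl(a(F;\,X,D)+1\bigr).
\]
Because $r_E\ge 1$, the right-hand side is $\ge 0$ (resp.\ $>0$) if and only if $a(F;X,D)+1\ge 0$ (resp.\ $>0$). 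Every divisor over $Y$ dominates a divisor over $X$, and conversely every divisor over $X$ is dominated by one over $Y$ (pass to a component of the normalized fiber product), so both implications follow. This is the content of \cite[Proposition~1.7]{Reid} and \cite[Proposition~3.16]{Ko97}, which is precisely why the paper cites them without reproducing the proof.
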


So far, we considered only local properties of the divisor $D$ on the surface $S$.
These properties will be used later to prove Theorem~\ref{theorem:main-computation}.
However, Theorem~\ref{theorem:main-computation} has a global aspect, so  we will need some global properties  of    $\mathbb{Q}$-divisors of $m$-basis type.  The following is originally due to Fujita and Odaka (\cite[Lemma 2.2]{Fujita-Odaka16}).

\begin{lemma}[{\cite[Corollary~2.10]{CPS18}}]
\label{lemma:a-bound}
We now suppose that $(S, \Omega)$ is a log del Pezzo surface.  In addition,
suppose that~$D$ is an ample  $\mathbb{Q}$-divisor of $m$-basis  type with respect to  $(S, \Omega)$. Let $\Gamma$ be an integral curve on $S$ with
$
\Gamma\sim_{\mathbb{Q}} \mu D
$
for some positive rational number $\mu$. Then 
$$
\mathrm{mult}_\Gamma(D)\leq\frac{1}{3\mu}+\epsilon_m,
$$
where $\epsilon_m$ is a constant depending on $m$ such that $\epsilon_m\to 0$ as $m\to \infty$.
\end{lemma}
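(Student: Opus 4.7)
The plan is to combine a filtration of $H^0(S, \mathcal{O}_S(-m(K_S+\Omega)))$ by order of vanishing along $\Gamma$ with asymptotic Riemann--Roch and Kawamata--Viehweg vanishing on the klt log del Pezzo pair $(S, \Omega)$, and then convert the resulting estimate into a Riemann sum in $m$. First I fix a basis $s_1, \ldots, s_{\ell_m}$ of $H^0(S, \mathcal{O}_S(-m(K_S+\Omega)))$ realizing the $m$-basis type divisor $D$. Since the coefficient of $\Gamma$ in $D(s_i)$ equals $\mathrm{ord}_\Gamma(s_i)$,
\[\mathrm{mult}_\Gamma(D) \;=\; \frac{1}{m\ell_m}\sum_{i=1}^{\ell_m} \mathrm{ord}_\Gamma(s_i).\]
Introduce the decreasing filtration $V_j := H^0(S, \mathcal{O}_S(-m(K_S+\Omega) - j\Gamma))$, embedded into $V_0$ by multiplication by the canonical section cutting out $\Gamma$. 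The subset of those $s_i$ with $\mathrm{ord}_\Gamma(s_i)\geq j$ is linearly independent inside $V_j$, so $\#\{i : \mathrm{ord}_\Gamma(s_i)\geq j\} \leq \dim V_j$, and summing in $j$ gives
\[\sum_{i=1}^{\ell_m} \mathrm{ord}_\Gamma(s_i) \;=\; \sum_{j\geq 1}\#\{i : \mathrm{ord}_\Gamma(s_i)\geq j\} \;\leq\; \sum_{j\geq 1} \dim V_j.\]

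Next I estimate $\dim V_j$ asymptotically. Writing $L := -(K_S+\Omega)$, the hypothesis $\Gamma \sim_{\mathbb{Q}} \mu D \sim_{\mathbb{Q}} \mu L$ gives $-m(K_S+\Omega) - j\Gamma \sim_{\mathbb{Q}} (m - j\mu)L$, which is ample for $0 \leq j < m/\mu$ and is not pseudoeffective for $j > m/\mu$, so the summation effectively runs over $1 \leq j \leq \lfloor m/\mu\rfloor$. Because $(S, \Omega)$ is klt with only quotient singularities, Kawamata--Viehweg vanishing annihilates the higher cohomology of $\mathcal{O}_S(-m(K_S+\Omega) - j\Gamma)$ in the ample range, and Reid's Riemann--Roch then yields
\[\dim V_j \;=\; \frac{(m-j\mu)^2}{2}\,L^2 \;+\; O(m),\]
uniformly in $0\leq j\leq m/\mu$, with implicit constant depending only on $(S, \Omega)$.

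Finally I convert the estimate into a Riemann sum. One obtains
\[\sum_{j=1}^{\lfloor m/\mu\rfloor}\dim V_j \;=\; \frac{L^2}{2}\sum_{j=1}^{\lfloor m/\mu\rfloor}(m-j\mu)^2 + O(m^2) \;=\; \frac{L^2\,m^3}{6\mu} + O(m^2),\]
together with $\ell_m = \tfrac{1}{2}L^2 m^2 + O(m)$, so dividing and collecting lower order terms produces $\mathrm{mult}_\Gamma(D)\leq \frac{1}{3\mu}+\epsilon_m$ with $\epsilon_m\to 0$ as $m\to\infty$. The principal technical obstacle is precisely guaranteeing that the $O(m)$ remainder in the Riemann--Roch estimate of $\dim V_j$ is uniform in $j$; this requires controlling the singular correction terms of Reid's formula independently of $j$, but once that uniformity is secured the Riemann-sum step is routine.
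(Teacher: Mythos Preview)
The paper does not supply a proof of this lemma; it is quoted from \cite[Corollary~2.10]{CPS18} with the remark, just before the statement, that the result is originally due to Fujita and Odaka \cite[Lemma~2.2]{Fujita-Odaka16}. Your argument is correct and is exactly the standard Fujita--Odaka proof: filter sections by order of vanishing along $\Gamma$, bound $\sum_i \mathrm{ord}_\Gamma(s_i)$ by $\sum_{j\geq 1}\dim V_j$, compute each $\dim V_j$ asymptotically via Riemann--Roch with vanishing, and recognise the result as a Riemann sum yielding $\tfrac{1}{3\mu}$ after division by $m\ell_m$. The uniformity in $j$ of the $O(m)$ error that you flag is indeed the only point requiring care; it holds because the singular correction terms in Reid's Riemann--Roch for a surface with quotient singularities depend only on the residue class of the divisor in the finite local class groups at the singular points, and hence assume only finitely many values as $j$ varies.
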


\section{Proof of Theorem~\ref{theorem:main-computation}}
\label{section:delta-computing}
For  convenience, denote by $\mathbb{P}$ the weighted projective space $\mathbb{P}(1,1,n,2n+1)$.
The quasi-smooth hypersurface $Y_{n}$ of degree $2n+1$ in $\mathbb{P}$  has a unique singular point  at the point $o_z=[0:0:1:0]$, which is  a cyclic quotient singularity of type $\frac{1}{n}(1,1)$.
We see that $$-\left(K_{Y_n}+\frac{1}{2}C_w\right)\sim_{\mathbb{Q}}\frac{3}{2}H,$$ where $H$ is a hyperplane section of weighted degree $1$.

With a sufficiently large  integer $m$, let $D$ be a $\mathbb{Q}$-divisor of $2mn$-basis type with respect to the log del Pezzo surface  $\left(Y_{n}, \frac{1}{2}C_w\right)$. Put $\lambda =\frac{8n+8}{8n+7}$.

\begin{theorem}\label{theorem:smooth-locus}
For $n\geq 4$, the log pair $\left(Y_{n}, \frac{1}{2}C_w+\lambda D\right)$  is log canonical outside the  singular point~$o_z$ of $Y_{n}$.\end{theorem}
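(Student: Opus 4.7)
The plan is to argue by contradiction. Assume $(Y_n, \tfrac{1}{2} C_w + \lambda D)$ is not log canonical at some smooth point $p \neq o_z$ of $Y_n$. Writing $p = [a_0:b_0:c_0:d_0]$, let $L \in |H|$ be the unique hyperplane-pencil curve through $p$, namely $L = \{b_0 x - a_0 y = 0\}$; since $L \cong \mathbb{P}(1,n)$ has its only singular point at $o_z$, it is smooth at $p$. I decompose $D = a C_w + b L + \Theta$, where $\Theta$ is effective and contains neither $C_w$ nor $L$ in its support. Using $C_w \sim_{\mathbb{Q}} \tfrac{2(2n+1)}{3} D$ and $L \sim_{\mathbb{Q}} \tfrac{2}{3} D$, Lemma~\ref{lemma:a-bound} gives $a \leq \tfrac{1}{2(2n+1)} + \epsilon_m$ and $b \leq \tfrac{1}{2} + \epsilon_m$, so for $m$ sufficiently large both $\tfrac{1}{2} + \lambda a$ and $\lambda b$ lie in $[0,1]$ and Lemma~\ref{lemma:inversion-of-adjunction} is applicable.

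Two sub-cases are dispatched by a single inversion of adjunction along $L$. When $p \notin C_w$, the local pair reduces to $\lambda b L + \lambda \Theta$, and inversion along $L$ forces $(\Theta \cdot L)_p > 1/\lambda$; the global bound $\Theta \cdot L \leq \tfrac{3}{2n}$ then produces $\lambda > 2n/3$, impossible since $\lambda < 2$. When $p \in C_w$ with $C_w$ smooth at $p$ and $L$ transverse to $C_w$ at $p$, inversion along $L$ applied to $((\tfrac{1}{2} + \lambda a)C_w + \lambda b L + \lambda \Theta)$ yields $\tfrac{1}{2} + \lambda a + \lambda(\Theta \cdot L)_p > 1$; bounding $(\Theta \cdot L)_p \leq \Theta \cdot L \leq \tfrac{3}{2n} - \tfrac{a(2n+1)}{n}$ rearranges to $\lambda > n/3$, contradicting $\lambda < 4/3 \leq n/3$ for $n \geq 4$.

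The main obstacle is the tangent subcase where $p \in C_w$ is smooth on $C_w$ and $(L \cdot C_w)_p \geq 2$. Since $L \cdot C_w = \tfrac{2n+1}{n}$ and the cyclic-quotient contribution $(L \cdot C_w)_{o_z} = \tfrac{1}{n}$ exhausts all but $2$ units of intersection on the smooth locus, this occurs only with $(L \cdot C_w)_p = 2$ at an isolated tangency. Here inversion along $L$ is vacuous, so I blow up $p$ via $\pi\colon \widetilde Y \to Y_n$ with exceptional curve $E$, and set $m := \mathrm{mult}_p(D)$. The estimate $m \leq a + b + L \cdot \Theta \leq \tfrac{n+2}{2n}$ makes the coefficient $\lambda m - \tfrac{1}{2}$ of $E$ strictly less than $1$, so the transformed pair fails log canonicity at some point $q \in E$. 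If $q$ differs from $q_0 := \widetilde{C_w} \cap \widetilde L \cap E$, inversion along $E$ combined with $E \cdot \widetilde\Theta = m - a - b$ produces $\lambda > \tfrac{3n}{n+4}$, contradicting $\lambda < \tfrac{3n}{n+4}$ for $n \geq 4$. If $q = q_0$, I blow up $q_0$ once more: the new exceptional curve $E_2$ carries coefficient $\lambda(a + b + m + \mu) - 1$, where $\mu := \mathrm{mult}_{q_0}(\widetilde\Theta) \leq \min\bigl(E \cdot \widetilde\Theta,\ \widetilde L \cdot \widetilde\Theta\bigr) = \min\bigl(m - a - b,\ \tfrac{3}{2n} - \tfrac{a(n+1)}{n} + \tfrac{b(n-1)}{n} - m\bigr)$. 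Optimizing jointly yields $a + b + m + \mu \leq \tfrac{n+1}{n}$ and $a + b + m + 2\mu \leq \tfrac{2n+3}{2n}$ up to $O(\epsilon_m)$, while non-log-canonicity forces either $a + b + m + \mu > 2/\lambda$ (if along $E_2$) or $a + b + m + 2\mu > 2/\lambda$ (if at a point of $E_2$ off the other transforms, with analogous inequalities at intersection points with $\widetilde{C_w}, \widetilde L, E$); since $\tfrac{2n+3}{2n} < 2/\lambda$ for $n \geq 2$, all possibilities are excluded.

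The principal technical obstacle is this tangent subcase: it requires two iterated blow-ups and the simultaneous control of four multiplicities $a, b, m, \mu$, coordinating Lemma~\ref{lemma:a-bound} with repeated applications of Lemma~\ref{lemma:inversion-of-adjunction} on the blown-up surfaces, and leveraging the intersection identities $(L \cdot C_w)_{o_z} = \tfrac{1}{n}$, $E \cdot \widetilde\Theta = m - a - b$, and $\widetilde L \cdot \widetilde\Theta = L \cdot \Theta - (m-a-b)$ at each stage. A minor auxiliary case concerns the finite set of points where $C_w$ itself is singular on the smooth locus of $Y_n$; since $\tfrac{1}{2}C_w$ remains log canonical at nodal or cuspidal singularities of $C_w$, an adapted multiplicity argument using Lemma~\ref{lemma:mult-1}, and if needed Proposition~\ref{proposition:lc-by-finite-morphism} to pass to a smooth ambient via a finite cover, covers this residual case.
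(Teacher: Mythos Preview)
Your proof follows essentially the same strategy as the paper's: decompose $D$ along $C_w$ and the unique hyperplane curve through $p$, bound the coefficients via Lemma~\ref{lemma:a-bound}, dispatch the non-tangent cases by a single inversion of adjunction along that curve, and handle the tangent case with two successive blow-ups followed by inversion along the second exceptional curve (using that $\bar C_w$, $\bar L$, $\bar E$ meet $F$ at three distinct points). One remark: your auxiliary case for singular points of $C_w$ on the smooth locus of $Y_n$ is vacuous, since the quasi-smoothness of $X_n$ forces $\nabla(z^2x+zr_{n+1}+r_{2n+1})\neq 0$ on the punctured cone and hence $C_w$ is smooth away from $o_z$; the paper simply asserts this and proceeds.
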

\begin{proof}
Suppose that the log pair $\left(Y_{n}, \frac{1}{2}C_w+\lambda D\right)$ is not log canonical at a smooth point $p$.  There is a unique curve $C$ in $|\mathcal{O}_{Y_{n}}(1)|$ that passes through the point $p$. Note that both $C_w$ and $C$ are irreducible and reduced. We may write
\[D=aC+bC_w+\Delta,\]
where $a, b$ are non-negative rational numbers and $\Delta$ is an effective $\mathbb{Q}$-divisor whose support contains neither $C$ nor $C_w$. Note that both $C_w$ and $C$ are smooth at $p$. 
Due to Lemma~\ref{lemma:a-bound},  we may assume that 
\[a\leq \frac{27}{50}, \ \ \ b\leq  \frac{27}{50(2n+1)}.\]

If $p$ lies outside $C_w$, then $\left(Y_{n}, C+\lambda \Delta\right)$ is not log canonical at $p$. It then follows from Lemma~\ref{lemma:inversion-of-adjunction} that $\left(\Delta\cdot C\right)_p>\frac{1}{\lambda}$. However, this yields an absurd inequality
\[\frac{1}{\lambda}<\Delta\cdot C=\frac{\left(\frac{3}{2}-a-b(2n+1)\right)(2n+1)}{n(2n+1)}\leq \frac{3}{2n}\leq\frac{3}{8}.\]

If the curve $C$ transversally intersects $C _w$ at $p$, then Lemma~\ref{lemma:inversion-of-adjunction} implies $$1<\left(\left(\left(\frac{1}{2}+\lambda b\right)C_w+\lambda\Delta\right)\cdot C\right)_p= \frac{1}{2}+\lambda b+\lambda\left(\Delta\cdot C\right)_p\leq \frac{1}{2}+\lambda \left(\frac{27}{50(2n+1)}+\frac{3}{2n}\right).$$
This is absurd. Therefore,  the curve $C$ must intersect $C _w$ at $p$ tangentially.   We have
\[\mathrm{mult}_p(\Delta)\leq \Delta\cdot C\leq \frac{3}{2n}.\]

Let $\phi\colon\widetilde{Y}_{n}\to Y_{n}$ be the blow up at the point $p$ and let $E$ be the exceptional divisor of~$\phi$.  Denote by $\widetilde{C}_w$, $\widetilde{C}$ and $\widetilde{\Delta}$ the proper transforms of $C_w$, $C$ and $\Delta$, respectively.
We then obtain
\[\phi^*\left(K_{Y_{n}}+\left(\frac{1}{2}+\lambda b\right)C_w+\lambda aC+\lambda\Delta\right)=K_{\widetilde{Y}_{n}}+\left(\frac{1}{2}+\lambda b\right)\widetilde{C}_w+\lambda a\widetilde{C}+\lambda\widetilde{\Delta}+cE,\]
where $c=\lambda a+\lambda b+\lambda \mathrm{mult}_p(\Delta) -\frac{1}{2}$.  Since $c\leq 1$ and $\lambda \mathrm{mult}_p(\Delta)\leq 1$, the log pair $$\left(\widetilde{Y}_{n},\left(\frac{1}{2}+\lambda b\right)\widetilde{C}_w+\lambda a\widetilde{C}+\lambda\widetilde{\Delta}+cE\right)$$ is not log canonical at the point $q$ where $E$, $\widetilde{C}_w$ and $\widetilde{C}$ meet. 

Let $\psi\colon\overline{Y}_{n}\to \widetilde{Y}_{n}$ be the blow up at the point $q$ and let $F$ be the exceptional divisor of~$\psi$. 
Denote by $\overline{C}_w$, $\overline{C}$,  $\overline{\Delta}$ and $\overline{E}$ the proper transforms of $\widetilde{C}_w$, $\widetilde{C}$,  $\widetilde{\Delta}$ and $E$ by $\psi$,  respectively.
Then
\[\psi^*\left(K_{\widetilde{Y}_{n}}+\left(\frac{1}{2}+\lambda b\right)\widetilde{C}_w+\lambda a\widetilde{C}+\lambda\widetilde{\Delta}+cE\right)=
K_{\overline{Y}_{n}}+\left(\frac{1}{2}+\lambda b\right)\overline{C}_w+\lambda a\overline{C}+\lambda\overline{\Delta}+c\overline E+dF,\]
where $d=\lambda a+\lambda b+c+\lambda \mathrm{mult}_{q}(\widetilde{\Delta}) -\frac{1}{2}$. Since
$$d=2\lambda (a+b)+\lambda \left(\mathrm{mult}_p(\Delta)+\mathrm{mult}_{q}(\widetilde{\Delta})\right)-1\leq 2\lambda \left(a+b+\mathrm{mult}_p(\Delta)\right)-1\leq 1,$$ the log pair
$$\left(\overline{Y}_{n}, \left(\frac{1}{2}+\lambda b\right)\overline{C}_w+\lambda a\overline{C}+\lambda\overline{\Delta}+c\overline E+F\right)$$ is not log canonical at a point on $F$. Meanwhile, the curves $\overline{C}_w$, $\overline{C}$ and $\overline{E}$ intersect $F$ transversally at distinct points and $\lambda\overline{\Delta}\cdot F=\lambda \mathrm{mult}_{q}(\widetilde{\Delta})\leq 1$, and hence the log pair must be log canonical along~$F$. This is a contradiction.
Consequently, the original log pair $\left(Y_{n}, \frac{1}{2}C_w+\lambda D\right)$  must be log canonical outside~$o_z$.
\end{proof}
Before we proceed further, we compute the dimension of $\mathrm{H}^0\left(Y_n, \mathcal{O}_{Y_n}(3mn)\right)$.
\begin{lemma}
For each positive integer $m$, $$h^0\left(Y_{n},\mathcal{O}_{Y_n}(3mn)\right)=\frac{9}{2}m^2n+\frac{3}{2}mn+3m+1.$$
\end{lemma}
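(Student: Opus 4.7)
The plan is purely a monomial-counting computation. The key structural observation is that the defining equation of $Y_n$ is linear in $w$: it reads
\[
w = z^2x + z r_{n+1}(x,y) + r_{2n+1}(x,y),
\]
so the projection $[x:y:z:w]\mapsto [x:y:z]$ induces an isomorphism $Y_n\cong\mathbb{P}(1,1,n)$. Under this isomorphism, the restriction of $\mathcal{O}_{\mathbb{P}}(d)$ to $Y_n$ is identified with $\mathcal{O}_{\mathbb{P}(1,1,n)}(d)$, because any weighted-homogeneous polynomial in $x,y,z,w$ of degree $d$ can be reduced modulo the defining relation to one in $x,y,z$ alone, still of weighted degree $d$.

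Consequently, $h^{0}(Y_n,\mathcal{O}_{Y_n}(3mn))$ equals the number of monomials $x^{\alpha}y^{\beta}z^{\gamma}$ with $\alpha,\beta,\gamma\geq 0$ and $\alpha+\beta+n\gamma=3mn$. Fixing $\gamma$, the index $\gamma$ ranges over $0,1,\ldots,3m$, and for each such $\gamma$ the number of admissible pairs $(\alpha,\beta)$ is $3mn-n\gamma+1$.

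Summing gives
\[
h^{0}(Y_n,\mathcal{O}_{Y_n}(3mn))=\sum_{\gamma=0}^{3m}\bigl(3mn-n\gamma+1\bigr)=(3m+1)(3mn+1)-n\cdot\frac{3m(3m+1)}{2},
\]
which simplifies to
\[
(3m+1)\cdot\frac{3mn+2}{2}=\frac{9}{2}m^{2}n+\frac{3}{2}mn+3m+1,
\]
as claimed.

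There is no real obstacle here; the only subtlety is justifying that $\mathcal{O}_{\mathbb{P}}(d)|_{Y_n}\cong\mathcal{O}_{\mathbb{P}(1,1,n)}(d)$ at the level of global sections, which follows immediately from the linearity of the defining equation in $w$. An alternative approach would be to use the standard exact sequence
\[
0\to\mathcal{O}_{\mathbb{P}}(3mn-2n-1)\to\mathcal{O}_{\mathbb{P}}(3mn)\to\mathcal{O}_{Y_n}(3mn)\to 0
\]
(noting the vanishing $H^{1}(\mathbb{P},\mathcal{O}_{\mathbb{P}}(3mn-2n-1))=0$ on the weighted projective space) and to count monomials on $\mathbb{P}(1,1,n,2n+1)$ in both degrees; the resulting telescoping sum yields the same answer, but the $w$-elimination argument above is cleaner.
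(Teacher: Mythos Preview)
Your proof is correct and takes essentially the same approach as the paper: both exploit the linearity of the defining equation in $w$ to reduce to counting monomials $x^{\alpha}y^{\beta}z^{\gamma}$ of weighted degree $3mn$ in $\mathbb{P}(1,1,n)$, and both mention the exact sequence as an alternative. Your framing via the explicit isomorphism $Y_n\cong\mathbb{P}(1,1,n)$ is slightly cleaner than the paper's presentation, but the underlying computation is identical.
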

\begin{proof} 
From the exact sequence
\[0\longrightarrow \mathcal{O}_{\mathbb{P}}\left(3mn-(2n+1)\right)\longrightarrow \mathcal{O}_{\mathbb{P}}\left(3mn\right)\longrightarrow \mathcal{O}_{Y_n}\left(3mn\right)\longrightarrow 0\]
we obtain $$h^0\left(Y_{n},\mathcal{O}_{Y_n}(3mn) \right)= h^0\left(\mathbb{P},\mathcal{O}_{\mathbb{P}}(3mn) \right)-h^0\left(\mathbb{P},\mathcal{O}_{\mathbb{P}}(3mn-(2n+1))\right),$$ i.e., the wanted dimension is equal to the difference of  the number of monomials of degree $3mn$ and the  number of monomials of degree $3mn-(2n+1)$ in  $\mathbb{P}$.

Since $w=z^2x+zr_{n+1}(x,y)+r_{2n+1}(x,y)$  on $Y_n$, every monomial  of degree $3mn$ containing  $w$  can be expressed as a quasi-homogeneous polynomial of degree $3mn$ in the variables $x, y, z$. Therefore,   
 $\mathrm{H}^0(Y_n, \mathcal{O}_{Y_n}(3mn))$ is generated by  the monomials of degree $3mn$ that do not involve $w$.  
Since the weight of $z$ is $n$,  the set of monomials
\begin{equation}\label{equation:monomial-set}
\mathcal{S}=\left\{x^{n_1}y^{n_2} ~|~ n_1+n_2=(3m-j)n \text{ and } 0\leq j \leq 3m \right \}
\end{equation}
is in 1-1 correspondence with the set of the monomials spanning  $\mathrm{H}^0\left(Y_n, \mathcal{O}_{Y_n}(3mn)\right)$.
This implies the claim.
\end{proof}

\begin{theorem}\label{theorem:singular-point}
For $n\geq 4$, the log pair $\left(Y_{n}, \frac{1}{2}C_w+\lambda D\right)$  is log canonical at $o_z$.
\end{theorem}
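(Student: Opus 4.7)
Since $o_z$ is a cyclic quotient singularity of type $\frac{1}{n}(1,1)$ on $Y_n$, Proposition~\ref{proposition:lc-by-finite-morphism} reduces the assertion to log canonicity at the origin of the pull-back pair $\bigl(\mathbb{C}^{2},\, \tfrac12 (f=0) + \lambda\pi^{*}D\bigr)$, where $\pi\colon\mathbb{C}^{2}\to Y_n$ is the local orbifold cover at $o_z$ and $f(x,y)=x+r_{n+1}(x,y)+r_{2n+1}(x,y)$. The curve $(f=0)$ is smooth at the origin, but is tangent to the axis $(x=0)$ with intersection multiplicity $n+1$, or $2n+1$ in the degenerate case where the coefficient of $y^{n+1}$ in $r_{n+1}$ vanishes.

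Imitating the proof of Theorem~\ref{theorem:smooth-locus}, I would write $D=a_xC_x+a_yC_y+cC_w+\Delta$, where $C_x=(x=0)\cap Y_n$, $C_y=(y=0)\cap Y_n$, and the support of $\Delta$ contains none of $C_x,C_y,C_w$. Lemma~\ref{lemma:a-bound} applied with $\mu=\tfrac23$ for $C_x,C_y$ and with $\mu=\tfrac{2(2n+1)}{3}$ for $C_w$ yields $a_x,a_y\leq \tfrac{27}{50}$ and $c\leq \tfrac{27}{50(2n+1)}$ for $m$ sufficiently large. The essential new input is a pair of local multiplicity bounds for $\pi^{*}D$ obtained by applying Fujita--Odaka's valuative inequality to a monomial basis of $H^{0}(Y_n,\mathcal{O}_{Y_n}(3mn))$ compatible with the relevant filtration (indexed by the set $\mathcal S$ of the previous lemma):
\[
\mathrm{mult}_{0}(\pi^{*}D)\leq 1+\epsilon_m\qquad\text{and}\qquad w(\pi^{*}D)\leq\tfrac{n+2}{2}+\epsilon_m,
\]
where $w$ denotes the weighted multiplicity with weights $w(x)=n+1,\, w(y)=1$, and $\epsilon_m\to 0$ as $m\to\infty$. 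The second bound comes from summing $(n+1)n_1+n_2$ over the monomials $x^{n_1}y^{n_2}z^{j}$ with $n_1+n_2=(3m-j)n$.

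The main step is to perform the weighted blow-up $\phi\colon\widetilde{\mathbb{C}^{2}}\to\mathbb{C}^{2}$ at the origin with weights $(n+1,1)$; this single blow-up resolves the tangency between $(x=0)$ and $(f=0)$. The coefficient of the exceptional divisor $E$ in $\phi^{*}\bigl(K+\tfrac12(f=0)+\lambda\pi^{*}D\bigr)-K_{\widetilde{\mathbb{C}^{2}}}$ is
\[
\tfrac{n+1}{2}+\lambda\,w(\pi^{*}D)-(n+1)\leq\frac{\lambda(n+2)-(n+1)}{2}=\frac{9(n+1)}{2(8n+7)}<1,
\]
so the pair is log canonical along the generic point of $E$. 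After the blow-up, $\widetilde{\mathbb C^2}$ has a smooth chart in which the proper transforms of $(x=0)$ and $(f=0)$ meet $E$ transversally at two distinct smooth points, and a second chart containing a cyclic quotient singularity of type $\tfrac{1}{n+1}(1,n)$ at which the proper transform of $(y=0)$ meets $E$.

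What remains is to verify log canonicity at these three special points of $E$, and this is the technical heart of the argument. At the two smooth points, the coefficients of all divisors involved are strictly less than $1$, so Lemma~\ref{lemma:inversion-of-adjunction} applied to the proper transforms of $C_x$ or $C_w$, combined with the global intersection numbers
\[
C_x\cdot\Delta=\tfrac1n\bigl(\tfrac32-a_x-a_y-c(2n+1)\bigr),\quad C_w\cdot\Delta=\tfrac{2n+1}{n}\bigl(\tfrac32-a_x-a_y-c(2n+1)\bigr),
\]
yields log canonicity (the hypothesis $n\geq 4$ ensures the resulting numerical inequalities go through, exactly as in Theorem~\ref{theorem:smooth-locus}). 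At the $\tfrac{1}{n+1}(1,n)$ singularity one applies Proposition~\ref{proposition:lc-by-finite-morphism} a second time, passes to its canonical cover, and repeats the inversion-of-adjunction argument against the proper transform of $C_y$, the main obstacle being the careful bookkeeping of the proper transform of $\pi^{*}\Delta$. The degenerate case $r_{n+1}(0,y)\equiv 0$ is handled identically with weights $(2n+1,1)$, for which an analogous computation gives the bound $\tfrac{10n+9}{2(8n+7)}<1$ on the coefficient of the exceptional divisor.
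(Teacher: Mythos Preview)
Your approach is quite different from the paper's. The paper reduces the statement to a single inequality $c_0(g^{mn\ell_m}f)\geq\lambda/(2mn\ell_m)$ for the local defining polynomials, and proves it by an \emph{iterated} Newton-polygon argument with coordinate changes $x\mapsto x-A_1y^\beta$ (their Steps~A and~B, which loop). The essential global input is their Claim~1: the Newton polygon of $f$ contains the point $(v_m,v_m)$, equivalently $w(f)\leq(w(x)+w(y))\,v_m$ for \emph{all} monomial weights $w$. Your two valuative bounds are precisely the special cases $w=(1,1)$ and $w=(n+1,1)$ of this statement; the paper needs the full family because each coordinate change replaces one monomial valuation by another.

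There is a genuine gap in your plan at the point $q_w=E\cap\widetilde{C_w}$. Inversion of adjunction along $\widetilde{C_w}$ there would require $c+\lambda(\widetilde{C_w}\cdot\widetilde{\pi^*\Delta})_{q_w}\leq 1$, and one has
\[
(\widetilde{C_w}\cdot\widetilde{\pi^*\Delta})_{q_w}=(\pi^*C_w\cdot\pi^*\Delta)_0-w(\pi^*\Delta),
\]
so the only upper bound your global intersection number supplies is $n\cdot C_w\cdot\Delta\leq\tfrac{3(2n+1)}{2}$, which is far too large; the analogue with $C_x$ has the same defect. The parallel you draw with Theorem~\ref{theorem:smooth-locus} is misleading: there the auxiliary curve $C$ satisfies $C\cdot\Delta\leq\tfrac{3}{2n}$, whereas here the relevant numbers are of order~$n$. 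The underlying problem is that neither of your two valuations detects components of $\Delta$ whose local branches are \emph{tangent} to $C_w$: a branch with equation $x+y^{n+1}+ty^{n+2}+\cdots$ has the same weight-$(n+1,1)$ order as $C_w$, so the weighted blow-up does not separate it from $\widetilde{C_w}$, and both pass through $q_w$ afterward. To continue you would need a further blow-up \emph{together with} a new $S$-invariant bound for the corresponding valuation, which is non-monomial in the original coordinates. This is exactly what the paper's loop accomplishes: the coordinate change makes the new bad direction monomial, the Newton-polygon bound for all weights then applies again, and the procedure terminates because the slope of the relevant edge strictly increases at each step and is bounded by~$v_m$.
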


\begin{proof} 
Put $\ell_m=h^0(Y_{n},\mathcal{O}_{Y_n}(3mn))$.  Denote by $v_m$ the positive integer such that 
$$\prod_{\mathbf{x}\in\mathcal{S}}\mathbf{x}=(xy)^{v_m},$$
where $\mathcal{S}$ is the set in \eqref{equation:monomial-set}.  Indeed, $$v_m=\frac{1}{4}nm(3m+1)(6nm+n+3)=mn\ell_m+\frac{1}{4}mn\left(3mn-3m+n-1\right).$$

Let $\{s_1,\ldots, s_{\ell_m}\}$ be a basis of  the vector space
$\mathrm{H}^0(Y_n, \mathcal{O}_{Y_n}(3mn)).$ Denote by $B_i$ the effective divisor of the section $s_i$ and set 
\[B:=\sum_{i=1}^{\ell_m}B_i.\]
Note that $\frac{1}{2mn\ell_m}B$ is  of $2mn$-basis type with respect to $\left(Y_{n}, \frac{1}{2}C_w\right).$

In the affine piece $U$ given by $z\ne 0$, the surface $Y_n$ is defined  by
\begin{equation}\label{local-equation}
w=x+r_{n+1}(x,y)+r_{2n+1}(x,y).
\end{equation}
In a neighborhood of  $o_z$, $y$ and $w$ may be regarded as local coordinates with  $\mathrm{wt}(y)=1$ and $\mathrm{wt}(w)=1$. However, instead of $y$ and $w$, we may regard $x$ and $y$ as local coordinates with  $\mathrm{wt}(x)=1$ and $\mathrm{wt}(y)=1$, due to \eqref{local-equation}. Even though $U$ is the  quotient  of $\mathbb{C}^3$ by the action $\zeta_n\cdot (x, y,w)\mapsto (\zeta_n x,  \zeta_n y,\zeta_n w)$, where $\zeta_n$ is a primitive $n$-th root of unity,
Proposition~\ref{proposition:lc-by-finite-morphism} allows us to replace  $Y_n$ by $\mathbb{C}^2$ with coordinates $x$ and $y$ and the point $o_z$ by the origin $(0,0)$. This also enables us to make use of the Newton polygon  method as in \cite{ParkWon}.

Each divisor $B_i$ can be defined in a neighborhood of the origin  by a polynomial $f_i$ of degree at most $3mn$ in  the variables~$x, y$. The divisor $B$ is defined by $f:=\prod f_i$ around the origin. 

Put $$g(x,y)=x+r_{n+1}(x,y)+r_{2n+1}(x,y).$$ Then $g=0$ defines the curve $C_w$ locally around the origin.  Set $h:=g^{mn\ell_m}f$.

Denote by $\mathcal{S}_{k}$  the set of monomials of degree $k$ in $x, y$. It then follows from  \cite[Lemma~4.3]{ParkWon} that there is an injective map 
$$I_m : \left\{f_i ~| ~1\leq i\leq \ell_m  \right\}\rightarrow \bigcup_{k=0}^{3mn}\mathcal{S}_{k}$$  such that  the monomial 
$I_m (f_i)$ is contained in $f_i$ for each $i$. The set 
$$\left\{ x^{n_1}y^{n_2}z^{3mn-n_1-n_2} \ | \ x^{n_1}y^{n_2}\in \mathcal{S}\right\}$$
 is linearly independent so that it should form a basis of   $\mathrm{H}^0(Y_{n},\mathcal{O}_{Y_n}(3mn) )$, and hence the image of  the map $I_m$ is exactly the set $\mathcal{S}$. 

Note that  the log pair $\left(Y_n,  \frac{1}{2}C_w+\frac{\lambda}{2mn\ell_m}B\right)$ is log canonical if $\left(Y_n,  \frac{\lambda}{2}C_w+\frac{\lambda}{2mn\ell_m}B\right)$ is log canonical.
Since  \[\begin{split}
 c_{o_z}\left(Y_n, \frac{1}{2}C_w+\frac{1}{2mn\ell_m}B\right)&=2mn\ell_m \cdot c_{o_z}\left(Y_n, mn\ell_m C_w+B\right)\\
 &=2mn\ell_m \cdot c_{0}\left(g^{mn\ell_m}f\right),\\
 \end{split}\]
in order to prove the statement, it is enough  to show that
\[c_0\left(h\right)\geq\frac{\lambda}{2mn\ell_m}.\]
This will be verified by a sophisticated version of the Newton polygon method  in~\cite{ParkWon} for log del Pezzo surfaces.
\medskip

We now consider the Newton polygon of the polynomial $f$. We use coordinate functions~$(s,t)$ for $\mathbb{R}^2$ in which the Newton polygon sits. The coordinate function $s$ corresponds to the exponents of $x$  appearing  in constituent monomials.
\medskip

\renewcommand{\theequation}{\arabic{equation}}
\setcounter{equation}{0}
\begin{claim}\label{claim:basis-mult}
The Newton polygon of the polynomial $f$ contains the point $(v_m, v_m)$ corresponding to the monomial~$x^{v_m}y^{v_m}$.
\end{claim}
\medskip

Since each $f_i$ contains the monomial $I_m (f_i)$, we have $w(f_i)\leq w(I_m (f_i))$ with respect to  given weights $w(x), w(y)$, and hence 
$$w(f)=w\left(\prod_{i=1} ^{\ell_m}f_i\right)\leq w\left(\prod_{i=1} ^{\ell_m}I_m \left(f_i\right)\right)=w\left(\prod_{\mathbf{x}\in\mathcal{S}}\mathbf{x}\right)=w(x^{v_m}y^{v_m}).$$ 
This proves the claim.
\medskip

\begin{claim}\label{claim:exponent-y}
The polynomial $g$ must contain  the monomials $x$ and $y^\nu$, where $\nu$ is a positive integer  less than or equal to  $2n+1$.
\end{claim}
Due to the quasi-smoothness of $Y_n$, $g$ must contain either $y^n$ or $y^{2n+1}$.

We keep $\nu$ for the lowest integer such that $y^\nu$ appears in $g$. Note that $\nu$ is either $n+1$ or~$2n+1$. We may assume that the coefficient of $y^{\nu}$ is $1$, so that we could write $$g(x,y)=x+y^{\nu}+\mbox{the remaining terms}.$$
The Newton polygon of $g$ has only two vertices. One is from $x$ and the other from  $y^\nu$.

\begin{claim}\label{claim:Newton}
For sufficiently large $m$, the Newton polygon of the polynomial $h$ must contain  the point 
 $$\left(\frac{2mn\ell_m}{\lambda}, \frac{2mn\ell_m}{\lambda}\right).$$ 
\end{claim}
It follows from Claim~\ref{claim:exponent-y} that the Newton polygon of $g^{mn\ell_m}$  contain the point 
 $\left(mn\ell_m\frac{2n+1}{2n+2}+1, mn\ell_m\frac{2n+1}{2n+2}+1\right),$
and hence the Newton polygon of $h$ contains the point 
 $$\left(mn\ell_m\frac{2n+1}{2n+2}+v_m+1, mn\ell_m\frac{2n+1}{2n+2}+v_m+1\right).$$ 
 Note that \[\begin{split}
 mn\ell_m\frac{2n+1}{2n+2}+v_m+1&=mn\ell_m\frac{2n+1}{2n+2}+\frac{1}{4}mn\left(4\ell_m+3mn-3m+n-1\right)+1\\
 &=mn\ell_m\frac{4n+3}{2n+2}+\frac{1}{4}mn\left(3mn-3m+n-1\right)+1.\\
 \end{split}\]
 Since $\ell_m=\frac{9}{2}m^2n+\frac{3}{2}mn+3m+1$, for sufficiently large $m$
 \[mn\ell_m\frac{2n+1}{2n+2}+v_m +1< mn\ell_m\frac{8n+7}{4n+4}=\frac{2mn\ell_m}{\lambda}.\]
 \medskip

 Let $\Lambda$ be the edge of the Newton polygon of $f$ that intersects the line $s=t$.  If the Newton polygon of $f$ meets the line $s=t$ at one of its vertices, then we choose the edge that meets the line $s=t$ and sits on the side of $s\geq t$ in $\mathbb{R}^2$.

\begin{claim}
If  $\Lambda$ is vertical, then $c_0\left(h\right)\geq \frac{\lambda}{2mn\ell_m}$. 
\end{claim}

Assign $w'(x)=\nu$ and $w'(y)=1$. 
Then, $g_{w'}=x+y^{\nu}$. On the other hand,
\[f_{w'}=\epsilon x^ay^b(x+y^\nu)^c(x+A_1y^\nu)^{c_1}\cdots(x+A_ry^\nu)^{c_r},\]
where $a$, $b$, $c$ and $c_i$ are non-negative integers, $A_i$ are non-zero constants other than $1$, and $\epsilon$ is a non-zero constant. 
Since $\Lambda$ is vertical, every monomial in $f_{w'}$ is plotted below the line $s=t$ in $\mathbb{R}^2$. The exponent $b$ cannot therefore  exceed~$v_m$  since the Newton polygon of $f$ contains the point $(v_m, v_m)$.  Also, $\nu a$ cannot exceed $(1+\nu)v_m$ either. The exponent $c$ and $c_i$ are at most $\frac{3mn\ell_m}{n+1}$ since $\nu c, \nu c_i \leq \deg (f)\leq 3mn\ell_m.$
Therefore, from Lemma~\ref{c_0} we obtain
$$c_0\left(h\right)\geq c_0\left(h_{w'}\right)=c_0\left(f_{w'}g_{w'}^{mn\ell_m}\right)=\mathrm{min}\left\{\frac{1}{a},\frac{1}{b},\frac{1}{c+mn\ell_m}, \frac{1}{c_1},\ldots, \frac{1}{c_r} \right\}\geq \frac{\lambda}{2mn\ell_m}.$$
\medskip

We now assume that $\Lambda$ is not vertical.  
Set  integral weights~$w(x), w(y)$ in  such a way  that   all the monomials of $f_w$ are plotted  on the edge~$\Lambda$. 
Then, the slope of the edge  $\Lambda$ is equal to $-\frac{w(x)}{w(y)}$.

\medskip

\textbf{Step A.}
We write an irreducible decomposition of $f_w$  as $$f_w=\epsilon x^ay^b\left(x^{\alpha_1}+g_1(x,y)\right)^{c_1} \cdots\left(x^{\alpha_r}+g_r(x,y)\right)^{c_r},$$ 
where $\epsilon$ is a non-zero constant and $g_i(x,y)$ is a quasi-homogeneous polynomial of degree $w(x^{\alpha_i})$ which does not contain the monomial $x^{\alpha_i}$.
Note that  $a,b\leq v_m$. 

\medskip

Let $c=\max\{c_i\}.$  We may assume that $c_1=c$.  For  convenience, we set $\alpha=\alpha_1$. Since~$x^\alpha+g_1(x,y)$ is irreducible,  $g_1(x,y)$ must  contain   the monomial $y^\beta$ for some positive integer $\beta$.

\medskip

\begin{claim}\label{claim:c-bound}
If $g_{i}(x, y)$ contains $y^\gamma$, then the exponent $c_i$ is at most $\frac{3mn\ell_m}{\gamma}$.
\end{claim}

This immediately follows from the inequality
\[3mn\ell_m\geq \deg(f_w)\geq c_i\gamma.\]

\medskip

Set $\sigma_m=3v_m-\frac{2mn\ell_m}{\lambda}$. Note that
\[v_m< 3v_m-\frac{2v_m}{\lambda}<\sigma_m\]
and for a sufficiently large $m$,
\[\sigma_m\leq \frac{2mn\ell_m}{\lambda}.\]

\begin{claim}\label{claim:small-c}
If $c\leq \sigma_m$, then $c_0\left(h\right)\geq \frac{\lambda}{2mn\ell_m}.$
\end{claim}

For the claim we consider the Newton polygon of $h$. If the Newton polygon of $h$ meets the line $s=t$ at one of its vertices, we may  choose weights $w'(x)$, $w'(y)$ in such a way that $$h_{w'}=x^{a'}y^{a'}.$$ Since the Newton polygon of $h$ contains the point $\left(\frac{2mn\ell_m}{\lambda}, \frac{2mn\ell_m}{\lambda}\right)$,  we have $a'\leq \frac{2mn\ell_m}{\lambda}$. Therefore,
$$c_0\left(h\right)\geq c_0\left(h_{w'}\right)=\frac{1}{a'}\geq\frac{\lambda}{2mn\ell_m}.$$ This allows us to assume that the line $s=t$ does not pass through any vertex point of the Newton polygon of $h$.

We assign new weights $w'(x)$, $w'(y)$ in such a way that $h_{w'}$ corresponds to the edge of the Newton polygon of $h$ intersecting the line~$s=t$. 

If the edge is either vertical or horizontal,  then $h_{w'}=x^{a'}\tilde{h}(y)$ or $y^{a'}\tilde{h}(x)$, respectively, where $\tilde{h}$ has multiplicity at most $a'-1$ at the origin. Since the Newton polygon of $h$ contains the point $\left(\frac{2mn\ell_m}{\lambda}, \frac{2mn\ell_m}{\lambda}\right)$,
we have $a'\leq \frac{2mn\ell_m}{\lambda}$. Therefore, $c_0\left(h\right)\geq c_0\left(h_{w'}\right)\geq \frac{\lambda}{2mn\ell_m}$. 

We may now assume that the edge is neither vertical nor horizontal. 

Depending on $g_{w'}$, we have the following three cases.

\medskip
Case (a). $g_{w'}=x+y^{\nu}$.
\medskip

In this case, an irreducible decomposition of $f_{w'}$ is given as
$$f_{w'}=\epsilon x^{a'}y^{b'}\left(x+A_1y^{\nu}\right)^{c'_1}\cdots\left(x+A_{r'}y^{\nu}\right)^{c'_{r'}},$$ where $A_i$ are non-zero distinct constants and $\epsilon$ is a non-zero constant. Therefore,
$$h_{w'}=\epsilon x^{a'}y^{b'}\left(x+A_1y^{\nu}\right)^{c'_1}\cdots\left(x+A_{r'}y^{\nu}\right)^{c'_{r'}}\left(x+y^{\nu}\right)^{mn\ell_m}.$$ Since the Newton polygon of $h$ contains the point $(\frac{2mn\ell_m}{\lambda}, \frac{2mn\ell_m}{\lambda})$,  we obtain $a', b'\leq \frac{2mn\ell_m}{\lambda}$.
If~$A_i\ne 1$ for any $i=1,\ldots, r'$, then Lemma~\ref{c_0} implies
$$c_0\left(h_{w'}\right)=\mathrm{min}\left\{\frac{1}{a'},\frac{1}{b'},\frac{1}{c'_i},\frac{1}{mn\ell_m},\frac{w(x)+w(y)}{w(h_{w'})} \left| \ i=1,\ldots, r' \right.\right\}.$$
If $A_i=1$ for some $i$, then Lemma~\ref{c_0} implies
$$c_0\left(h_{w'}\right)=\mathrm{min}\left\{\frac{1}{a'},\frac{1}{b'},\frac{1}{c_j'},\frac{1}{c_i'+mn\ell_m},\frac{w(x)+w(y)}{w(h_{w'})} \left| \ j\ne i \right. \right\}.$$
Claim~\ref{claim:c-bound} holds  for  arbitrary weights. Therefore it implies that $c_i'\leq \frac{3mn\ell_m}{\nu}\leq \frac{3mn\ell_m}{5}$ and hence 
$\frac{1}{c_i'+mn\ell_m}>\frac{\lambda}{2mn\ell_m}$.
Since the Newton polygon of $h$ contains the point $\left( \frac{2mn\ell_m}{\lambda}, \frac{2mn\ell_m}{\lambda}\right)$, we obtain $\frac{w(x)+w(y)}{w(h_{w'})}\geq \frac{\lambda}{2mn\ell_m}$.
Consequently, $c_0\left(h\right)\geq \frac{\lambda}{2mn\ell_m}.$

\medskip
Case (b). $g_{w'}=x$. 
\medskip

In this case, an irreducible decomposition of $f_{w'}$ is given as 
$$f_{w'}=\epsilon x^{a'}y^{b'}\left(x^{\kappa_1}+h_1(x,y)\right)^{c_1'}\left(x^{\kappa_2}+h_2(x,y)\right)^{c_2'} \cdots\left(x^{\kappa_r}+h_{r'}(x,y)\right)^{c_{r'}'},$$
where $\epsilon$ is a non-zero constant and $h_i(x,y)$ is a quasi-homogeneous polynomial of degree $w'(x^{\kappa_i})$ which does not contain the monomial $x^{\kappa_i}$.   Then
$$h_{w'}= \epsilon x^{a'}y^{b'}\left(x^{\kappa_1}+h_1(x,y)\right)^{c_1'}\left(x^{\kappa_2}+h_2(x,y)\right)^{c_2'} \cdots\left(x^{\kappa_r}+h_{r'}(x,y)\right)^{c_{r'}'}x^{mn\ell_m}.$$
From the fact that the Newton polygon of $h$ contains the point $\left( \frac{2mn\ell_m}{\lambda}, \frac{2mn\ell_m}{\lambda}\right)$, it follows that $a'+mn\ell_m$, $b'$ and $\frac{w(h_{w'})}{w(x)+w(y)}$  cannot exceed $\frac{2mn\ell_m}{\lambda}$.

Let $c'=\max\{c_i'\}.$

If $w$ and $w'$ define the same slope, then $c=c'$, and hence $c'\leq \sigma_m$. 

Suppose that $w$ and $w'$ define  different slopes and, in addition, that $c'>\sigma_m$. Let $\Lambda_{f_{w'}}$ be the edge defined by $f_{w'}$ and let $\Lambda_{h_{w'}}$ be the edge defined by $h_{w'}$. The edge $\Lambda_{h_{w'}}$ meets the line $s=t$ at an interior point of $\Lambda_{h_{w'}}$. 
On the other hand, $\Lambda_{f_{w'}}$ does not intersect  the line $s=t$ at an interior point of $\Lambda_{f_{w'}}$.
 Observe that $\Lambda_{h_{w'}}$ is the translation of $\Lambda_{f_{w'}}$ by $mn\ell_m$ along the $s$-axis. This means that
the edge $\Lambda_{f_{w'}}$   lies on the side $s\leq t$ in $\mathbb{R}^2$. However, the condition $c'>\sigma_m$ implies that the edge
$\Lambda_{f_{w'}}$ has a point $(s_0, t_0)$ with $s_0>\sigma_m>v_m$. The Newton polygon of $f$  cannot then contain the point $(v_m, v_m)$. This contradicts Claim~\ref{claim:basis-mult}. Consequently, $c'\leq \sigma_m$.
 
Lemma~\ref{c_0} then implies
$$c_0\left(h_{w'}\right)=\mathrm{min}\left\{\frac{1}{a'+mn\ell_m},\frac{1}{b'},\frac{1}{c'},\frac{w(x)+w(y)}{w(h_{w'})} \right\}\geq \frac{\lambda}{2mn\ell_m},$$
and hence $c_0\left(h\right)\geq \frac{\lambda}{2mn\ell_m}.$

\medskip
Case (c). $g_{w'}=y^\nu$.
\medskip

Similarly to Case (b), an irreducible decomposition of $f_{w'}$ is given as 
$$f_{w'}=\epsilon x^{a'}y^{b'}\left(x^{\kappa_1}+h_1(x,y)\right)^{c_1'}\left(x^{\kappa_2}+h_2(x,y)\right)^{c_2'} \cdots\left(x^{\kappa_r}+h_{r'}(x,y)\right)^{c_{r'}'},$$
where $\epsilon$ is a non-zero constant and $h_i(x,y)$ is a quasi-homogeneous polynomial of degree $w'(x^{\kappa_i})$ which  does not contain the monomial $x^{\kappa_i}$.   Then,
$$h_{w'}= \epsilon x^{a'}y^{b'}\left(x^{\kappa_1}+h_1(x,y)\right)^{c'_1}\left(x^{\kappa_2}+h_2(x,y)\right)^{c_2'} \cdots\left(x^{\kappa_r}+h_{r'}(x,y)\right)^{c_{r'}'}y^{\nu mn\ell_m}.$$
The arguments in Case~(b) work almost verbatim for Case~(c). The only difference is that $\Lambda_{h_{w'}}$ is the translation of $\Lambda_{f_{w'}}$ by $\nu mn\ell_m$ along the $t$-axis. But the difference does not damage the proof at all. We consequently obtain  from Lemma~\ref{c_0} that
$$c_0\left(h\right)\geq c_0\left(h_{w'}\right)=\mathrm{min}\left\{\frac{1}{a'},\frac{1}{b'+\nu mn\ell_m},\frac{1}{c'},\frac{w(x)+w(y)}{w(h_{w'})} \right\}\geq \frac{\lambda}{2mn\ell_m}.$$

The three cases above complete the proof of Claim~\ref{claim:small-c}.

\medskip

\begin{claim}\label{claim-horizontal}
If $\Lambda$ is horizontal, then $c_0\left(h\right)\geq \frac{\lambda}{2mn\ell_m}.$
\end{claim}
For the proof of Claim~\ref{claim-horizontal}, the  proof of Claim~\ref{claim:small-c} works without the condition $c\leq \sigma_m$. The condition is required only in Cases~(b) and (c).
More precisely, it is required only when $w$ and $w'$ define the same slope. In the case when $\Lambda$ is horizontal, if $w$ and $w'$ define the same slope, we do not have to consider the exponents $c'_i$ because  $x^{\kappa_i}+h_i(x,y)$  does not vanish at the origin.
\medskip

\textbf{Step B.}  Suppose that  $c>\sigma_m$.      
Since $w\left((x^{\alpha}+y^{\beta})^c\right)\leq  w(x^{v_m}y^{v_m})$,  if $\alpha, \beta \geq 2$, 
then we  immediately obtain $c\leq v_m<\sigma_m$. 
Therefore, either $\alpha=1$ or $\beta=1$. By exchanging coordinates if necessary, we may assume that $\alpha=1$.
The exchanging coordinates changes $g$ from $x+y^\nu+\ldots$ into $y+x^\nu+\ldots$. However, it is easy to see  that, with the exchanged $g$, every claim and every step work verbatim in the whole proof.
Note that $\alpha=1$ implies that $w(x)\geq w(y)$ and $\frac{w(x)}{w(y)}$ is the integral number $\beta$.
\medskip

We may therefore write
$$f_w=\epsilon x^ay^b\left(x+A_1y^\beta\right)^{c} \left(x^{\alpha_2}+g_2(x,y)\right)^{c_2} \cdots\left(x^{\alpha_r}+g_r(x,y)\right)^{c_r},$$ 
where $A_1$ is a non-zero constant.
The weighted leading term $f_w$ contains the monomial $x^{(a+c+\sum_{i=2}^r \alpha_ic_i)}y^b$  and  
$a+c+\sum_{i=2}^r \alpha_ic_i\geq c>\sigma_m>v_m$.

\begin{claim}\label{claim:beta}
The exponent $\beta$ is at most $2$.
\end{claim}

This immediately follows from
\[3mn\ell_m\geq\deg(f)\geq \deg (f_w)\geq \beta c>\beta \sigma_m>\beta\left(\frac{4n+5}{4n+4}\right)mn\ell_m.\]

Due to Claims~\ref{claim-horizontal} and~\ref{claim:beta}, we may assume that $\beta$ is either $1$ or $2$.

\medskip

 We apply  a change of coordinates $x+A_1y^\beta\mapsto x$ to the polynomials $f_i$, $f$, $g$ and  $h$.  Set
\[ \begin{split}f_i^{(1)}\left(x,y\right)&:=f_i\left(x-A_1y^\beta,y\right), \\
 f^{(1)}\left(x,y\right)&:=f\left(x-A_1y^\beta,y\right), \\
g^{(1)}\left(x,y\right)&:=g\left(x-A_1y^\beta,y\right), \\
 h^{(1)}\left(x,y\right)& :=h\left(x-A_1y^\beta,y\right).\\ \end{split} \]
Then $f^{(1)}_w\left(x,y\right)=f_w\left(x-A_1y^\beta,y\right)$ and $f^{(1)}=\prod  f^{(1)}_i$.

\begin{claim}
 The Newton polygon of the polynomial $f^{(1)}$  again contains the point $(v_m, v_m)$.
 \end{claim}
 
 This immediately follows from \cite[Lemma~4.3]{ParkWon}.

\begin{claim}
The new polynomial $g^{(1)}$ must contain $y^{\beta}$.
\end{claim}
This is obvious since $\beta\leq 2<\nu$.  
\bigskip

Now we go back to Step~A with $f^{(1)}$, $g^{(1)}$ and $h^{(1)}$ instead of $f$, $g$ and $h$, i.e., let $\Lambda^{(1)}$ be the edge of the Newton polygon of $f^{(1)}$ that intersects the line $s=t$. We also assign  weights $w^{(1)}(x), w^{(1)}(y)$ in such a way that  the monomials of $f^{(1)}_{w^{(1)}}$ lie on the edge $\Lambda^{(1)}$.  
The slope of the  edge $\Lambda^{(1)}$ is~$-\frac{w^{(1)}(x)}{w^{(1)}(y)}$. 

Let $L$ be the line in $\mathbb{R}^2$ obtained by extending the edge $\Lambda$. We observe that 
\[(\dagger)   \left\{\aligned
&\text{there is no monomial of $f^{(1)}$ corresponding to the points under the line $L$;}\\
&\text{there is no monomial of $f^{(1)}$  corresponding to the points on the line $L$  with $s<c$.}\\
\endaligned\right.%
\]
On the other hand,  $f^{(1)}$ contains the monomial $x^{a+c+\sum_{i=2}^r \alpha_ic_i}y^b$ that lies on $L$.
This shows that~$\Lambda^{(1)}$ is  strictly steeper than $\Lambda$,  i.e., $-\frac{w^{(1)}(x)}{w^{(1)}(y)}<-\frac{w(x)}{w(y)}$.

As before, we have an irreducible decomposition  
$$f^{(1)}_{w^{(1)}}=\epsilon^{(1)} x^{a^{(1)}}y^{b^{(1)}}\left(x^{\alpha^{(1)}_1}+g_1^{(1)}(x,y)\right)^{c_1^{(1)}} \cdots\left(x^{\alpha^{(1)}_{r^{(1)}}}+g_{r^{(1)}}^{(1)}(x,y)\right)^{c^{(1)}_{r^{(1)}}},$$ 
where $\epsilon^{(1)}$ is a non-zero constant and $g_i^{(1)}(x,y)$ is a quasi-homogeneous polynomial of degree $w^{(1)}(x^{\alpha_i^{(1)}})$ that does not contain the monomial $x^{\alpha_i^{(1)}}$. 

 Let $c^{(1)}=\max\left\{c_i^{(1)}\right\}.$
 Again we assume that $c^{(1)}=c_1^{(1)}$. The  polynomial $g_1^{(1)}$ must 
  contain  the monomial $y^{\beta^{(1)}}$ for some positive integer $\beta^{(1)}$.

If $c^{(1)}\leq \sigma_m$, then the proof  is done by Claim~\ref{claim:small-c}. If $c^{(1)}> \sigma_m$, then we follow Step~B.  We should here remark that  $\alpha^{(1)}$ must be $1$ 
because 
$\frac{w^{(1)}(x)}{w^{(1)}(y)}>\frac{w(x)}{w(y)}\geq 1$.  Note $\frac{w^{(1)}(x)}{w^{(1)}(y)}$ is the integral number $\beta^{(1)}$ and 
$\frac{w^{(1)}(x)}{w^{(1)}(y)}-\frac{w(x)}{w(y)}=\beta^{(1)}-\beta\geq 1$. Now we go back to Step~A with the newly coordinate-changed polynomials  $f_i^{(2)}$, $f^{(2)}$, $g^{(2)}$ and $h^{(2)}$. 

\begin{claim}
For each $k$, the coordinate-changed polynomial  $f^{(k)}$ satisfies  Claim~\ref{claim:basis-mult}.
\end{claim}
This immediately follows from \cite[Lemma~4.3]{ParkWon}.

\begin{claim}
For each $k$, the coordinate-changed polynomial  $g^{(k)}$ satisfies Claim~\ref{claim:exponent-y}.
\end{claim}
Note that  $\beta\leq 2 n+1$.  For $k\geq 1$, $g^{(k)}$ must keep the monomial~$y^{\beta}$ since $\beta<\beta^{(1)}$ and the sequence $\{\beta^{(i)}\}$ is strictly increasing.

\begin{claim}
Claim~\ref{claim:small-c} is valid for each loop.
\end{claim}

From the second loop, only Case~(a) in Claim~\ref{claim:small-c} may not be valid because it uses Claim~\ref{claim:c-bound}.

Suppose that $\beta=2$. Then, from the second loop, only if $\frac{w'(x)}{w'(y)}=\beta$, Case~(a) can occur. Since $\frac{w^{(k)}(x)}{w^{(k)}(y)}=\beta^{(k)}>\beta$,  the monomials in $f^{(k)}_{w'}$ appear below the line $s=t$. This means that the exponents of $y$ in the monomials of $f^{(k)}_{w'}$  cannot exceed $v_m$. Therefore, $c_i\leq \frac{v_m}{2}$. 

We now suppose that $\beta=1$. Then  $w(x)=w(y)$. Observe that after the first loop, Case~(a) can occur only in the second loop. Moreover, if Case~(a) occurs in the second loop, then~$w'(x)=w'(y)$ and 
\[\begin{split}  f_w &=\epsilon x^ay^b\left(x+A_1y\right)^{c} \left(x+A_2y\right)^{c_2} \cdots\left(x+A_ry\right)^{c_r},\\
 f_{w'}^{(1)} &=f_{w}^{(1)}=\epsilon \left(x-A_1y\right)^ay^bx^{c} \left(x+(A_2-A_1)y\right)^{c_2} \cdots\left(x+(A_r-A_1)y\right)^{c_r},\\
 g_{w'}^{(1)} &=g_{w}^{(1)}=x-A_1y.\\ \end{split}\]

Here $c>\sigma_m$.
Since the Newton polygon of $f$ contains $(v_m, v_m)$, it follows that $a+c$ cannot exceed $2v_m$. Therefore,
\[a+mn\ell_m\leq 2v_m-c+mn\ell_m<2v_m-\sigma_m+mn\ell_m<\frac{2mn\ell_m}{\lambda}.\]
This validates Claim~\ref{claim:small-c}.

\begin{claim}
The whole procedure terminates in a finite number of loops.
\end{claim}
The slope of $\Lambda^{(i)}$ is bounded from  below by $-v_m$ since the Newton polygon of  $f^{(i)}$ must contain the point $(v_m, v_m)$ and it also keeps the property ($\dagger$) above. Termination is therefore guaranteed because the slope of $\Lambda^{(i)}$ drops by at least $1$ for each loop.
\end{proof}

\begin{proof}[Proof of  Theorem~\ref{theorem:main-computation}]
For every $\mathbb{Q}$-divisor $D$ of $2mn$-basis type with respect to $\left(Y_n, \frac{1}{2}C_w\right)$ with $n\geq 4$ and sufficiently large $m$, Theorems~\ref{theorem:smooth-locus} and \ref{theorem:singular-point}  show that
$\left(Y_n, \frac{1}{2}C_w+\frac{8n+8}{8n+7}D\right)$ is log canonical. This implies that for  $n\geq 4$ and sufficiently large $m$
$$\delta_{2mn}\left(Y_n,\frac{1}{2}C_w\right)\geq \frac{8n+8}{8n+7}.$$
This implies that
$$\delta\left(Y_n,\frac{1}{2}C_w\right)\geq \limsup_m \delta_{2mn}\left(Y_n,\frac{1}{2}C_w\right)\geq \frac{8n+8}{8n+7}.$$
\end{proof}

\medskip

\footnotesize

\textbf{Acknowledgements.}

The authors was supported by IBS-R003-D1, Institute for Basic Science in Korea. 
They thank Charles Boyer and Chi Li for their explanations on Sasaki-Einstein metrics and conical K\"ahler-Einstein metrics.

\normalsize

\end{document}